
\documentclass[12pt]{amsart}
\usepackage{amssymb}
\usepackage{hyperref}

\setlength{\voffset}{-1in}
\setlength{\topmargin}{1.5cm}
\setlength{\hoffset}{-1in}
\setlength{\oddsidemargin}{2cm}
\setlength{\evensidemargin}{2cm}
\setlength{\textwidth}{17cm}
\setlength{\textheight}{24cm}

\theoremstyle{plain}
\newtheorem{theorem}{Theorem}[section]

\newtheorem{corollary}[theorem]{Corollary}
\newtheorem{lemma}[theorem]{Lemma}
\newtheorem{claim}{Claim}

\theoremstyle{definition}

\newtheorem{remark}[theorem]{Remark}

\theoremstyle{remark}
\newtheorem{notation}[theorem]{Notation}

\numberwithin{equation}{section}

\newcommand{\N}{\mathbb N}
\newcommand{\Z}{\mathbb Z}

\newcommand{\R}{\mathbb R}
\newcommand{\C}{\mathbb C}


\newcommand{\SO}{\operatorname{SO}}

\newcommand{\so}{\mathfrak{so}}

\newcommand{\op}{\operatorname}

\newcommand{\spec}{\operatorname{Spec}}

\newcommand{\diag}{\operatorname{diag}}

\newcommand{\ba}{\backslash}
\newcommand{\mult}{\operatorname{mult}}

\newcommand{\norma}[1]{\|{#1}\|_1}

\title[The spectrum on $p$-forms]{The spectrum on $p$-forms of a lens space}
\author{Emilio~A.~Lauret}
\address{Institut f\"ur Mathematik, Humboldt Universit\"at zu Berlin, Unter den Linden 6, 10099 Berlin, Germany.}
\address{Permanent affiliation: CIEM--FaMAF (CONICET), Universidad Nacional de C\'ordoba, Medina Allende s/n, Ciudad Universitaria, 5000 C\'ordoba, Argentina.}
\email{elauret@famaf.unc.edu.ar}
\subjclass[2010]{58J50, 58J53}
\keywords{Spectrum, lens space, isospectrality, one-norm}
\thanks{This research was partially supported by grants from CONICET, FONCyT and SeCyT--UNC}

\begin{document}

\begin{abstract}
We give an explicit description of the spectrum of the Hodge--Laplace operator on $p$-forms of an arbitrary lens space for any $p$. We write the two generating functions encoding the $p$-spectrum as rational functions. As a consequence, we prove a geometric characterization of lens spaces that are $p$-isospectral for every $p$ in an interval of the form $[0,p_0]$. 
\end{abstract}

\maketitle

\section{Introduction}\label{sec:intro}
Each compact Riemannian manifold $(M,g)$ has an associated \emph{Hodge--Laplace operator} $\Delta_p$ acting on smooth $p$-forms given by $dd^*+d^*d$, where $d$ denotes the exterior derivative and $d^*$ denotes the formal adjoint.
Here $\Delta_0$ is the \emph{Laplace-Beltrami operator}.
The spectrum of $\Delta_p$ is discrete and is usually called the \emph{$p$-spectrum of $(M,g)$},  denoted by $\spec_p(M,g)$ or just $\spec_p(M)$.
Two compact Riemannian manifolds will be said to be \emph{$p$-isospectral} if their $p$-spectra coincide
and in the literature, one abbreviates $0$-isospectral to \emph{isospectral}.
It is well known that $\spec_p(M,g)$ does not determine the geometry of $(M,g)$, as shown by many examples of non-isometric $p$-isospectral manifolds, via the so called \emph{generalized Sunada method} (see for instance \cite{Milnor64}, \cite{Vigneras80}, \cite{Sunada85}, \cite{DeTurckGordon89}) working for all $p$ and also with methods working for individual values of $p$ (see for instance \cite{Gordon86}, \cite{Ikeda88}, \cite{Gornet00}, \cite{MR-p-iso}, \cite{GornetMcGowan06}).

Lens spaces are compact manifolds with positive constant curvature and cyclic fundamental group.
This class of spaces has provided many isospectral examples of different kinds.
Ikeda~\cite{Ikeda80_isosp-lens} gave families of lens spaces mutually $0$-isospectral (see also \cite{Shams11}).
Later in \cite{Ikeda88}, using the same family, he found pairs of lens spaces that are $p$-isospectral for all $0\leq p<p_0$ but not $p_0$-isospectral, for any $p_0>0$.
More recently, Miatello, Rossetti and the author~\cite{LMR-onenorm} found families of pairs, in any odd dimension $n\geq 5$, of lens spaces that are $p$-isospectral for all $p$, but are not \emph{strongly isospectral}.
In particular such pair cannot be constructed by the generalized Sunada method due to DeTurck and Gordon~\cite{DeTurckGordon89}, which uses representation equivalent discrete subgroups.

Furthermore, \cite{LMR-onenorm} also gives an explicit formula for the multiplicity of each eigenvalue in the $0$-spectrum of a lens space and a geometric characterization of $0$-isospectral lens spaces.
The \emph{one-norm} $\norma{\cdot}$ of an element in $\Z^n$ is given by the sum of the absolute values of its entries, and two subsets $\mathcal L$ and $\mathcal L'$ of $\Z^n$ are called $\norma{\cdot}$-isospectral if for each positive integer $k$, there are the same number of elements in $\mathcal L$ and in $\mathcal L'$ with one norm equal to $k$.
Each $(2n-1)$-dimensional lens space $L$ has associated a \emph{congruence lattice} $\mathcal L\subset \Z^n$ (see \eqref{eq1:conglattice}).
Hence, the above mentioned characterization for $0$-isospectral lens spaces can be stated as follows: $L$ and $L'$ are $0$-isospectral if and only if $\mathcal L$ and $\mathcal L'$ are \emph{$\norma{\cdot}$-isospectral}.
(See \cite{BoldtLauret-onenormDirac}, \cite{LMR-survey}, \cite{Lauret-spec0cyclic}, \cite{MohadesHonori17}, \cite{MohadesHonori16} for related results.)

The aim of this paper, as a continuation of the study begun in \cite{LMR-onenorm}, is to give an explicit description of the $p$-spectrum of a lens space for any value of $p$ (Theorem~\ref{thm:eigenvalue-multiplicities}).
As a consequence, for any $p_0\geq0$, we obtain a geometric characterization of lens spaces $p$-isospectral for all $0\leq p\leq p_0$ (Corollary~\ref{cor:charact[0,p]}).
The main tool is a closed explicit formula from \cite{LR-fundstring} for the multiplicity of weights in the irreducible representations of $\SO(2n)$ occurring in the decomposition of $\operatorname{Sym}^k(\C^{2n})\otimes\bigwedge^p (\C^{2n})$ for any $k\geq0$ and $0\leq p\leq n-1$.

Some results of this article have been recently used in \cite{Lauret-computationalstudy} to prove the non-existence of $p$-isospectral lens spaces (and lens orbifolds) for $p$ in certain subsets of $\{0,1,\dots,n-1\}$.

The paper is organized as follows.
Section~\ref{sec:results} introduces the notation and states the results. 
The proofs are included in 
Section~\ref{sec:proofs}.

The author wishes to express his thanks to Roberto Miatello for several helpful comments and his active interest in the publication of this paper.
Furthermore, the author is greatly indebted to the anonymous referee for very carefully reading the manuscript, pointing out many typos and offering valuable suggestions.

\section{Results}\label{sec:results}
We set $G=\SO(2n)$ and $K=\SO(2n-1)$, thus the homogeneous space $G/K$ is diffeomorphic to the $(2n-1)$-dimensional sphere $S^{2n-1}$.
We consider the round metric on $S^{2n-1}$.

If $\Gamma$ is a discrete subgroup of $G$ (thus $\Gamma$ is finite), then $\Gamma\ba S^{2n-1}$ has a structure of a good orbifold, which is a manifold if $\Gamma$ acts freely on $S^{2n-1}$.
In this  case, $\Gamma\ba S^{2n-1}$ is usually called a \emph{spherical space form} and $\Gamma$ is isomorphic to the fundamental group of $\Gamma\ba S^{2n-1}$.

For $\Gamma\subset G$ finite, let us denote by $\Delta_{\Gamma,p}$ the Hodge-Laplace operator on $p$-forms of $\Gamma\ba S^{2n-1}$, which is given by the restriction of $\Delta_p$ on $S^{2n-1}$ to $\Gamma$-invariant smooth $p$-forms on $S^{2n-1}$.
The space $\Gamma\ba S^{2n-1}$ is orientable, thus $\spec_p(\Gamma\ba S^{2n-1}) = \spec_{2n-1-p}(\Gamma\ba S^{2n-1})$, hence $p$-isospectrality for every $0\leq p\leq n-1$ is actually equivalent to $p$-isospectrality for every $p$.

In order to exhibit a description of  $\spec_p(\Gamma\ba S^{2n-1})$, we introduce some notation related to the root system associated to $\mathfrak g_\C:=\so(2n,\C)$. 
We use standard choices for the Cartan subalgebra $\mathfrak h$ and the root system $\Sigma(\mathfrak g_\C,\mathfrak h)$ (see Notation~\ref{notation}).
In particular, the set of positive roots is $\Sigma^+(\mathfrak g_\C,\mathfrak h) := \{\varepsilon_i \pm \varepsilon_j: 1\leq i<j\leq n\}$ and the lattice of $G$-integral weights is $P(G):=\bigoplus_{j=1}^n\Z\varepsilon_j \simeq \Z^n$. 

Given a dominant $G$-integral weight $\Lambda$, we will write $\pi_{\Lambda}$ for the irreducible representation of $G$ with highest weight $\Lambda$. 
Set $\Lambda_p= \varepsilon_1+\dots+\varepsilon_p$ for any $1\leq p\leq n$, $\Lambda_0=0$, and $\bar \Lambda_n = \Lambda_n-2\varepsilon_n$.
For $k\geq0$, define
\begin{equation}\label{eq1:pi_kp}
\pi_{k,p} =
\begin{cases}
0
&\quad\text{if $p=0$},\\
\pi_{k\varepsilon_1+\Lambda_p}
&\quad\text{if $1\leq p<n$},\\
\pi_{k\varepsilon_1+\Lambda_n}\oplus \pi_{k\varepsilon_1+\bar\Lambda_n}
&\quad\text{if $p=n$}.\\
\end{cases}
\end{equation}
It is well known that $V_{\pi_{0,p}}\simeq\bigwedge^p(\C^{2n})$ and $\operatorname{Sym}^{k+1}(\C^{2n}) \simeq V_{\pi_{k,1}}\oplus \operatorname{Sym}^{k-1}(\C^{2n})$ as $G$-modules.
For $\Gamma\subset G$, write $V_{\pi}^\Gamma$ for the subset of $\Gamma$-invariant elements in $V_{\pi}$.
For $k\geq 1$, we set
\begin{equation}\label{eq1:lambda_(k,p)}
\lambda_{k,p} =
\begin{cases}
0&\quad\text{if $p=-1$},\\
(k+p)(k+2n-2-p) &\quad\text{if $0\leq p\leq n-1$.}
\end{cases}
\end{equation}

The next theorem is well known (see for instance \cite[Thm.~4.2]{IkedaTaniguchi78}, \cite[Prop.~2.1]{Ikeda88}, \cite[Thm.~1.1]{LMR-repequiv}, \cite[Prop.~2.2]{LMR-onenorm}).
It describes the $p$-spectrum of $\Gamma\ba S^{2n-1}$ in algebraic terms.

\begin{theorem}\label{thm1:spectrum-general}
Fix $0\leq p\leq n-1$.
Each eigenvalue in $\spec_p(\Gamma\ba S^{2n-1})$ is of the form  $\lambda_{k,p-1}$ or $\lambda_{k,p}$ for some $k\geq 1$, with multiplicity
\begin{equation*}
\mult_{\Delta_{\Gamma,p}}(\lambda_{k,p-1}) = \dim V_{\pi_{k-1,p}}^\Gamma
\quad\text{ and }\quad
\mult_{\Delta_{\Gamma,p}}(\lambda_{k,p}) = \dim V_{\pi_{k-1,p+1}}^\Gamma.
\end{equation*}
\end{theorem}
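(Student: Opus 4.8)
The plan is to realize smooth $p$-forms on $\Gamma\ba S^{2n-1}$ as the $\Gamma$-invariant smooth sections of a homogeneous vector bundle over $G/K$, to decompose the $L^2$-sections under $G$ by Frobenius reciprocity, and then to match the Hodge--Laplace eigenspaces with the irreducible summands. Since $\Gamma\subset G$, the operators $d$, $d^*$ and $\Delta_p$ on $S^{2n-1}$ all commute with the $G$-action; hence they preserve every $G$-isotypic component, and $\Delta_{\Gamma,p}$ is simply the restriction of $\Delta_p$ to $\Gamma$-invariants. Consequently it is enough to compute the $G$-module structure of each $\Delta_p$-eigenspace on the round sphere and, at the very end, replace every occurring $V_\pi$ by its subspace $V_\pi^\Gamma$ of $\Gamma$-fixed vectors.

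First I would identify $p$-forms on $G/K$ with sections of $G\times_K\bigwedge^p\mathfrak p^*$, where $\mathfrak p\simeq\C^{2n-1}$ is the isotropy representation of $K=\SO(2n-1)$. By the Peter--Weyl theorem together with Frobenius reciprocity, the multiplicity of $\pi_\Lambda$ in the space of $p$-forms equals $\dim\Hom_K\!\big(V_{\pi_\Lambda},\,\textstyle\bigwedge^p\mathfrak p^*_\C\big)$, so the $G$-decomposition is controlled by the branching rule $\SO(2n)\downarrow\SO(2n-1)$ applied to the exterior powers of the standard representation.

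Next I would use the Hodge decomposition $\Omega^p(S^{2n-1})=\mathcal H^p\oplus d\,\Omega^{p-1}\oplus d^*\Omega^{p+1}$ and the $G$-equivariance of $d$. Since $d$ intertwines $\Delta_{p-1}$ and $\Delta_p$, it restricts to a $G$-isomorphism from the space of coexact $(p-1)$-eigenforms with a given positive eigenvalue onto the exact $p$-eigenforms with the same eigenvalue. This reduces everything to the coexact forms and explains the two families in the statement: the coexact part of $\spec_p$ contributes the eigenvalues $\lambda_{k,p}$ (via $\pi_{k-1,p+1}$), while the exact part of $\spec_p$ is a copy of the coexact part of $\spec_{p-1}$ and thus contributes $\lambda_{k,p-1}$ (via $\pi_{k-1,p}$).

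The main obstacle is the explicit spherical computation: proving that, for each $k\geq1$, the coexact $p$-eigenforms on $S^{2n-1}$ with eigenvalue $\lambda_{k,p}$ form exactly $V_{\pi_{k-1,p+1}}$. I would establish this by combining the branching multiplicities above with a Weitzenb\"ock-type identity that makes $\Delta_p$ act on coexact forms as the Casimir operator of $G$ (suitably normalized); one then checks that the Casimir value $\langle\Lambda,\Lambda+2\rho\rangle$ on $\Lambda=(k-1)\varepsilon_1+\Lambda_{p+1}=k\varepsilon_1+\varepsilon_2+\dots+\varepsilon_{p+1}$ equals precisely $(k+p)(k+2n-2-p)=\lambda_{k,p}$, and that the branching forces each such eigenspace to be irreducible. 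The boundary cases require care: for $p=0$ the harmonic $0$-forms and the conventions $\lambda_{k,-1}=0$, $\pi_{k,0}=0$ must be incorporated, and for $p=n-1$ the module $\pi_{k-1,n}$ splits into the two self-dual summands $\pi_{(k-1)\varepsilon_1+\Lambda_n}\oplus\pi_{(k-1)\varepsilon_1+\bar\Lambda_n}$. Taking $\Gamma$-invariants of each irreducible summand then yields the asserted multiplicities $\dim V_{\pi_{k-1,p}}^\Gamma$ and $\dim V_{\pi_{k-1,p+1}}^\Gamma$.
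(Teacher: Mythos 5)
This theorem is not proved in the paper at all: it is stated as well known, with references to \cite{IkedaTaniguchi78}, \cite{Ikeda88}, \cite{LMR-repequiv} and \cite{LMR-onenorm}, and the proofs given in those references are precisely the argument you outline --- realize $p$-forms as sections of $G\times_K\bigwedge^p\mathfrak{p}^*_\C$, decompose via Peter--Weyl and Frobenius reciprocity using the multiplicity-free branching law $\SO(2n)\downarrow\SO(2n-1)$, separate exact from coexact eigenforms through the $G$-equivariant isomorphism induced by $d$, and identify the eigenvalue on each irreducible summand with the Casimir value $\langle\Lambda,\Lambda+2\rho\rangle$, which on $\Lambda=k\varepsilon_1+\varepsilon_2+\dots+\varepsilon_{p+1}$ equals $(k+p)(k+2n-2-p)=\lambda_{k,p}$. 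Your proposal is therefore correct and takes essentially the same (standard) route as the paper's cited sources, including the appropriate care at the boundary cases $p=0$ (harmonic forms and the conventions $\pi_{k,0}=0$, $\lambda_{k,-1}=0$) and $p=n-1$ (the splitting $\pi_{k,n}=\pi_{k\varepsilon_1+\Lambda_n}\oplus\pi_{k\varepsilon_1+\bar\Lambda_n}$).
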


A \emph{lens space} is a spherical space form $\Gamma\ba S^{2n-1}$ with $\Gamma$ cyclic, which can be assumed included in the standard maximal torus 
\begin{equation}\label{eq:maximaltorus}
T := \{\diag(R(\theta_1),\dots R(\theta_n)): \theta_1,\dots,\theta_n \in\R\} 
\qquad 
\left(R(\theta):= \left[\begin{smallmatrix}\cos(\theta)&-\sin(\theta) \\ \sin(\theta)&\cos(\theta) \end{smallmatrix}\right]\right)
\end{equation}
of $G$.
These spaces are parametrized as follows: for each $q\in\N$ and $s_1,\dots,s_n\in\Z$ satisfying $\gcd(q,s_1,\dots,s_n)=1$, set
$\gamma=\diag(R(2\pi s_1/q),\dots , R(2\pi s_1/q)) \in T$ and let $\Gamma$ be the cyclic group of order $q$ generated by $\gamma$;
the space $L(q;s_1,\dots,s_n) := \Gamma\ba S^{2n-1}$ is called an \emph{orbifold lens space}.
The group $\Gamma$  acts freely on $S^{2n-1}$ if and only if $\gcd(q,s_j)=1$ for all $j$, and in this case $L(q;s_1,\dots,s_n)$ is a \emph{lens space} (see \cite{Cohen-book} for details).
We will actually consider spaces more general than orbifold lens spaces.
Namely, good orbifolds of the form $\Gamma\ba S^{2n-1}$ with $\Gamma$ any discrete subgroup of $T$.
Note that $\Gamma$ must be finite and abelian.

For $\Gamma$ a finite subgroup of $T$, we want to determine $\spec_p(\Gamma\ba S^{2n-1})$ by using Theorem~\ref{thm1:spectrum-general}.
If $\Gamma\subset T$ is finite, we have (see \cite[Lem.~3.1]{LMR-onenorm} or \cite[Prop.~2.6]{Lauret-spec0cyclic}) that
\begin{equation}\label{eq1:dimV_pi_kp+1}
\dim V_{\pi_{k,p}}^\Gamma=  \sum_{\mu\in\mathcal L_\Gamma} m_{\pi_{k,p}}(\mu),
\end{equation}
where $\mathcal L_\Gamma = \{\mu\in P(G): \gamma^\mu=1\quad\forall\,\gamma\in\Gamma\}$, $\gamma^\mu = e^{\mu(H)}$ if $\gamma=\exp(H)$, and $m_{\pi_{k,p}}(\mu)$ denotes the multiplicity of the weight $\mu$ in the representation $\pi_{k,p}$. 
In the case of a lens space $L(q;s_1,\dots,s_n)=\Gamma\ba S^{2n-1}$, it turns out that
\begin{equation}\label{eq1:conglattice}
\mathcal L_\Gamma =
\left\{
a_1\varepsilon_1+\dots+a_n\varepsilon_n
\in P(G)\simeq \Z^n: a_1s_1+\dots+a_ns_n\equiv0\pmod q\right\}.
\end{equation}

The main tool will be a closed explicit expression for $m_{\pi_{k,p}}(\mu)$ obtained in \cite{LR-fundstring} (see Lemma~\ref{thm1:multip(k,p)} below).
The expression for $\mu =\sum_{j=1}^n a_j\varepsilon_j\in P(G)$ depends only on 
\begin{align}\label{eq:normaZ(mu)}
	\norma{\mu} := \sum_{j=1}^n |a_j| \quad \text{ and } \quad
	Z(\mu) := \#\{1\leq j\leq n: a_j=0\}.
\end{align}
Combining this expression, \eqref{eq1:dimV_pi_kp+1} and Theorem~\ref{thm1:spectrum-general}, we get an explicit formula for $\mult_{\Delta_{\Gamma,p}}(\lambda_{k,p})$ for $\Gamma$ a finite subgroup of $T$ (Theorem~\ref{thm:eigenvalue-multiplicities}).
The formula is written in terms of the following numbers:
for any subset $\mathcal L$ of $P(G)$, set
\begin{align}\label{eq1:N_L(k)N_L(k,l)}
N_{\mathcal L}(k,\ell) &= \#\{\mu\in \mathcal L: \norma{\mu}=k,\;Z(\mu)=\ell\},\\
N_{\mathcal L}(k)& = \#\{\mu\in \mathcal L: \norma{\mu}=k\}. 
\notag
\end{align}

Our next goal is to give an explicit description of $\spec_p(\Gamma\ba S^{2n-1})$ for $\Gamma\subset T$ finite, alternative to Theorem~\ref{thm:eigenvalue-multiplicities}.
This description uses generating functions, giving more elegant results.
Generating functions were first used for spectral problems concerning lens spaces by Ikeda and Yamamoto~\cite{IkedaYamamoto79}.

Ikeda in \cite{Ikeda88} associated to an arbitrary finite subgroup $\Gamma$ of $G=\SO(2n)$, the $n$ generating functions given by
\begin{equation}\label{eq1:F_Gamma^p}
F_\Gamma^{p}(z) =
\sum_{k\geq0}  \mult_{\Delta_{\Gamma,p}} (\lambda_{k+1,p}) z^{k}
=\sum_{k\geq0} \dim V_{\pi_{k,p+1}}^\Gamma z^k,
\end{equation}
for each $0\leq p\leq n-1$.
From Theorem~\ref{thm1:spectrum-general}, $\spec_p(\Gamma\ba S^{2n-1})$ is encoded by $F_\Gamma^{p-1}(z)$ and $F_\Gamma^{p}(z)$.
In particular (\cite[Prop.~2.1]{Ikeda88}),
\begin{equation}\label{eq1:p-isosp}
\Gamma\ba S^{2n-1}\text{ and }\Gamma'\ba S^{2n-1}
\text{ are \emph{$p$-isospectral}}
\text{ if and only if }
\begin{cases}
F_{\Gamma}^{p-1}(z)=F_{\Gamma'}^{p-1}(z),\\
F_{\Gamma}^{p}(z)=F_{\Gamma'}^{p}(z).
\end{cases}
\end{equation}
Ikeda gave expressions for the functions $F_\Gamma^p(z)$ for $0\leq p\leq n-1$ (see \cite[(2.13)]{Ikeda88}).
These expressions were used in the computational study \cite{GornetMcGowan06} of the $p$-spectrum of lens spaces (equation (3) in \cite{GornetMcGowan06} works only for $q$ prime).
As a consequence of Lemma~\ref{thm1:multip(k,p)}, we will give alternative expressions (only when $\Gamma\subset T$) in terms of the numbers in \eqref{eq1:N_L(k)N_L(k,l)}.

We next assume again that $\Gamma$ is any finite subgroup of $T$.
A formula for $m_{\pi_{k,1}}(\mu) = m_{\pi_{(k+1)\varepsilon_1}}(\mu)$ is previously known (see \cite[Lem.~3.2]{LMR-onenorm}). 
This fact and \eqref{eq1:dimV_pi_kp+1} applied to \eqref{eq1:F_Gamma^p} when $p=0$ give (see \cite[Thm.~3.6]{Lauret-spec0cyclic})
\begin{equation}\label{eq1:F_Gamma^0=rationalfcn}
F_\Gamma^{0}(z)
= \frac{1}{z}\left(\frac{\vartheta_{\mathcal L_\Gamma}(z)}{(1-z^2)^{n-1}} -1\right),
\end{equation}
where $\vartheta_{\mathcal L_\Gamma}(z)$ is the \emph{one-norm generating function} associated to $\mathcal L_\Gamma$ given by (see also \cite{Sole95}) 
\begin{equation}\label{eq1:theta_L}
\vartheta_{\mathcal L_\Gamma}(z)=\sum_{k\geq0} N_{\mathcal L_\Gamma}(k)z^k.
\end{equation}
Equation \eqref{eq1:F_Gamma^0=rationalfcn} gives a neat way to describe the $0$-spectrum of $\Gamma\ba S^{2n-1}$ for any $\Gamma\subset T$.

In the next result we exhibit an expression of $F_\Gamma^{p}(z)$, analogous to \eqref{eq1:F_Gamma^0=rationalfcn}, valid for any $p$.
For each $0\leq \ell\leq n$, set
\begin{equation}\label{eq1:theta_L^l}
\vartheta_{\mathcal L_\Gamma}^{(\ell)}(z) = \sum_{k\geq0} N_{\mathcal L_\Gamma}(k,\ell) z^k.
\end{equation}

\begin{theorem}\label{thm:F_Gamma^p}
Let $\Gamma$ be a finite subgroup of $T$.
For each $1\leq p\leq n$, there exist Laurent polynomials $A_{p}^{(\ell)}(z)$ for $0\leq\ell\leq n$, with $-p\leq \op{deg}(A_{p}^{(\ell)}(z))\leq n-\ell-1-2p$, satisfying that
\begin{equation}\label{eq1:F_Gamma^p-1-formula}
F_\Gamma^{p-1}(z) = \frac{1}{(1-z^2)^{n-1}}\sum_{\ell=0}^{n} \vartheta_{\mathcal L_\Gamma}^{(\ell)}(z) \; A_{p}^{(\ell)}(z)+\frac{(-1)^p}{z^p}
\end{equation}
for every $1\leq p\leq n$.
Explicit expressions for $A_{p}^{(\ell)}(z)$ are given in \eqref{eq3:A_pl}.
\end{theorem}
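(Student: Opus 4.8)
The plan is to unfold the generating function into a sum over the weights of the congruence lattice and over $k$, and then to exploit that, by Lemma~\ref{thm1:multip(k,p)}, the weight multiplicity $m_{\pi_{k,p}}(\mu)$ depends on $\mu$ only through the pair $(\norma{\mu},Z(\mu))$. Combining \eqref{eq1:F_Gamma^p} (with $p$ replaced by $p-1$) with \eqref{eq1:dimV_pi_kp+1} and interchanging the order of summation gives
\[
F_\Gamma^{p-1}(z)=\sum_{k\geq0}\Big(\sum_{\mu\in\mathcal L_\Gamma}m_{\pi_{k,p}}(\mu)\Big)z^{k}
=\sum_{\mu\in\mathcal L_\Gamma}\sum_{k\geq0}m_{\pi_{k,p}}(\mu)\,z^{k}.
\]
Grouping the weights of $\mathcal L_\Gamma$ according to the value of $(\norma{\mu},Z(\mu))=(m,\ell)$ and using the counting numbers \eqref{eq1:N_L(k)N_L(k,l)}, this becomes
\[
F_\Gamma^{p-1}(z)=\sum_{\ell=0}^{n}\sum_{m\geq0}N_{\mathcal L_\Gamma}(m,\ell)\,\Phi_{p}^{(\ell)}(z;m),
\qquad
\Phi_{p}^{(\ell)}(z;m):=\sum_{k\geq0}m_{\pi_{k,p}}(\mu)\,z^{k},
\]
where $\mu$ is any fixed weight with $\norma{\mu}=m$ and $Z(\mu)=\ell$. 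The statement is thereby reduced to a closed evaluation of the single-type series $\Phi_{p}^{(\ell)}(z;m)$ and a recombination over $m$.

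The core of the argument is the evaluation of $\Phi_{p}^{(\ell)}(z;m)$. I would substitute the explicit formula of Lemma~\ref{thm1:multip(k,p)} and sum the resulting binomial terms against $z^{k}$. The mechanism behind the denominator is the same one already visible in the case $p=0$ underlying \eqref{eq1:F_Gamma^0=rationalfcn}: raising $k$ by $2$ amounts to inserting a cancelling pair of weights $\pm\varepsilon_i$, so the multiplicities grow polynomially in the number $t=(k-\norma{\mu})/2$ of such pairs, and summing the corresponding geometric-type series in $z^{2}$ produces exactly the factor $(1-z^2)^{-(n-1)}$, while the minimal occurrence $k=\norma{\mu}$ factors out a power $z^{m}$. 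The dependence on $\ell=Z(\mu)$, which records how many coordinates of $\mu$ vanish, enters only through finitely many lower-order correction terms in the multiplicity formula and is thus absorbed into an $\ell$-indexed Laurent polynomial numerator $A_p^{(\ell)}(z)$ that is independent of $m$; the sole exception is the zero weight $\mu=0$ (the case $(m,\ell)=(0,n)$), whose series carries one extra term beyond this rational shape. For $p=n$ the same computation is run on each of the two summands $\pi_{k\varepsilon_1+\Lambda_n}$ and $\pi_{k\varepsilon_1+\bar\Lambda_n}$ of $\pi_{k,n}$ and the results added.

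With the evaluation in hand I would substitute it back and recognise the generating functions \eqref{eq1:theta_L^l}: since $\sum_{m\geq0}N_{\mathcal L_\Gamma}(m,\ell)z^{m}=\vartheta_{\mathcal L_\Gamma}^{(\ell)}(z)$, the double sum collapses to
\[
F_\Gamma^{p-1}(z)=\frac{1}{(1-z^2)^{n-1}}\sum_{\ell=0}^{n}\vartheta_{\mathcal L_\Gamma}^{(\ell)}(z)\,A_p^{(\ell)}(z)+C_p(z),
\]
and the residual term $C_p(z)$ collects the anomalous contribution of the zero weight, which lies in every $\mathcal L_\Gamma$ and hence contributes a fixed summand independent of $\Gamma$. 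Pinning down $C_p(z)=(-1)^p z^{-p}$ would be done by an explicit evaluation of the zero-weight series $\Phi_p^{(n)}(z;0)$, together with the observation that $F_\Gamma^{p-1}(z)$ is an honest power series, so the negative-degree part of the rational sum must be cancelled exactly. For the degree bounds I would track the extreme powers of $z$ surviving in each numerator, crucially using the support constraint that a weight with $Z(\mu)=\ell$ has $\norma{\mu}\geq n-\ell$, equivalently that $z^{n-\ell}$ divides $\vartheta_{\mathcal L_\Gamma}^{(\ell)}(z)$ for $\ell<n$; this divisibility is what makes the asserted normalisation $-p\leq\deg A_p^{(\ell)}\leq n-\ell-1-2p$ attainable. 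As a consistency check the whole formula must reduce, when $p=1$, to the known expression \eqref{eq1:F_Gamma^0=rationalfcn}.

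The main obstacle I foresee is precisely the central evaluation together with its bookkeeping. The multiplicity formula of Lemma~\ref{thm1:multip(k,p)} is an alternating sum of several binomial terms depending on $k$, $\norma{\mu}$, $Z(\mu)$, $p$ and $n$, and the delicate part is to sum these against $z^{k}$ so that the shift $z^{m}$ and the single denominator $(1-z^2)^{n-1}$ emerge cleanly, the finitely many correction terms collapse into $\ell$-indexed Laurent polynomials meeting the exact degree range, and the leftover zero-weight discrepancy is isolated as $(-1)^p z^{-p}$ uniformly in $1\leq p\leq n$, including the reducible case $p=n$.
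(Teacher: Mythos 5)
Your reduction is sound, and it is in fact the same first step as the paper's own proof: unfold $F_\Gamma^{p-1}(z)=\sum_{k\geq0}\dim V_{\pi_{k,p}}^\Gamma z^k$ via \eqref{eq1:dimV_pi_kp+1}, group the weights of $\mathcal L_\Gamma$ by the pair $(\norma{\mu},Z(\mu))$, substitute Lemma~\ref{thm1:multip(k,p)}, and sum $\sum_{s\geq0}\binom{s+n-2}{n-2}z^{2s}=(1-z^2)^{-(n-1)}$ to produce the denominator and the numerators $A_p^{(\ell)}(z)$ of \eqref{eq3:A_pl}. This is precisely Claim~\ref{claim:F_Gamma^p-1} in the paper, where the deviation from the rational shape is encoded by the truncated series $\Upsilon_m^{(\ell)}(z)$ and the terms $B_{\Gamma,p}^{(\ell)}(z)$.

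The genuine gap is in the step you assert rather than prove: that the single-weight series $\sum_{k\geq0}m_{\pi_{k,p}}(\mu)z^k$ equals $z^{\norma{\mu}}A_p^{(\ell)}(z)/(1-z^2)^{n-1}$ exactly for every $\mu\neq0$, the zero weight being ``the sole exception.'' For any weight with $0<\norma{\mu}<p$ --- and such weights do occur in $\mathcal L_\Gamma$, e.g.\ $\varepsilon_i-\varepsilon_j$ whenever $s_i\equiv s_j\pmod q$, and all weights when $\Gamma$ is small --- the constraint $k\geq0$ forces the sum over $r=(k+p-\norma{\mu})/2$ to start at $\lceil(p-\norma{\mu})/2\rceil$, which can exceed the natural threshold $\gamma=i+p-j-\alpha-t$ below which the binomial factor vanishes; so each such weight a priori contributes a correction supported in negative powers of $z$. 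That these corrections cancel, for every nonzero weight, when summed over the indices $j,t,\beta,\alpha,i$, while the zero-weight corrections sum to exactly $(-1)^pz^{-p}$ uniformly in $1\leq p\leq n$, is the real content of the theorem, and it occupies the bulk of the paper's proof: the reorganization into the coefficients $C_{p,g}^{(\ell)}$, the identity $C_{p,g}^{(\ell)}=0$ for $n+1+2g-p\leq\ell<n$ together with $C_{p,g}^{(n)}=(-1)^{p-1+g}\binom{n-1}{g}$ (Claim~\ref{claim:C_pg^l}, proved by induction on $g$ through several binomial identities), and a final generating-function computation collapsing the surviving $\ell=n$ terms to $(-1)^{p-1}z^{-p}$. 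Your fallback argument --- that $F_\Gamma^{p-1}(z)$ is an honest power series, so negative powers ``must be cancelled exactly'' --- does not fill this hole: it only says that the total negative-power part, summed over the whole lattice, cancels; it neither shows that each nonzero weight contributes zero correction, nor that the residual equals $(-1)^pz^{-p}$ (for that you must both evaluate the zero-weight contribution and rule out contributions from everything else). You correctly flag this as ``the main obstacle,'' but flagging it is not overcoming it; as written, the proposal reproduces the easy half of the paper's argument and leaves its combinatorial core unproved.
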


\smallskip

We next derive spectral consequences from Theorem~\ref{thm:F_Gamma^p}.
For $\Gamma$ and $\Gamma'$ finite subgroups of $T$, one obtains immediately from \eqref{eq1:F_Gamma^0=rationalfcn} the following geometric characterization proved in \cite[Thm.~3.6(i)]{LMR-onenorm},

\vspace{2mm}
\noindent\stepcounter{equation}(\theequation)\hspace*{0.05\textwidth}%
\parbox{0.7\textwidth}{$\Gamma\ba S^{2n-1}$ and $\Gamma'\ba S^{2n-1}$ are $0$-isospectral if and only if $\mathcal L_\Gamma$ and $\mathcal L_{\Gamma'}$ are \emph{$\norma{\cdot}$-isospectral} (i.e.\ $\vartheta_{\mathcal L_{\Gamma}}(z) = \vartheta_{\mathcal L_{\Gamma'}}(z)$).
}\\[2mm]
Generalizing this result, we obtain the following geometric characterization.

\begin{corollary}\label{cor:charact[0,p]}
Let $0\leq p_0\leq n-1$ and let $\Gamma$ and $\Gamma'$ be finite subgroups of $T$.
Then, $\Gamma\ba S^{2n-1}$ and $\Gamma'\ba S^{2n-1}$ are $p$-isospectral for all $0\leq p\leq p_0$ if and only if
\begin{equation}\label{eq1:condition[0,p_0]-isosp}
\sum_{\ell=0}^n \ell^h \, \vartheta_{\mathcal L_{\Gamma}}^{(\ell)}(z) =
\sum_{\ell=0}^n \ell^h \, \vartheta_{\mathcal L_{\Gamma'}}^{(\ell)}(z)
\qquad\text{for all }0\leq h\leq p_0,
\end{equation}
or equivalently,
\begin{equation}\label{eq1:condition[0,p_0]-isospN_kl}
\sum_{\ell=0}^n \ell^h \, N_{\mathcal L_{\Gamma}}(k,\ell) =
\sum_{\ell=0}^n \ell^h \, N_{\mathcal L_{\Gamma'}}(k,\ell)
\quad\text{for all } k\geq0,\qquad\text{for all }0\leq h\leq p_0.
\end{equation}
\end{corollary}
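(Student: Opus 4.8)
The plan is to combine the single-value criterion \eqref{eq1:p-isosp} with the rational expression of Theorem~\ref{thm:F_Gamma^p}, and then to reduce the whole statement to an invertible change of basis in the index $\ell$. First I would note that, by \eqref{eq1:p-isosp}, the spaces $\Gamma\ba S^{2n-1}$ and $\Gamma'\ba S^{2n-1}$ are $p$-isospectral for every $0\le p\le p_0$ if and only if $F_{\Gamma}^{q}(z)=F_{\Gamma'}^{q}(z)$ for all $-1\le q\le p_0$. Since $\pi_{k,0}=0$, we have $F_\Gamma^{-1}(z)=\sum_{k\ge0}\dim V_{\pi_{k,0}}^\Gamma z^k=0$ for every $\Gamma$, so the condition reduces to $F_{\Gamma}^{q}(z)=F_{\Gamma'}^{q}(z)$ for all $0\le q\le p_0$.

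Next I would substitute Theorem~\ref{thm:F_Gamma^p}, written with $q=p-1$ (so $r:=q+1$ runs over $1\le r\le p_0+1\le n$). Writing $D^{(\ell)}(z):=\vartheta_{\mathcal L_\Gamma}^{(\ell)}(z)-\vartheta_{\mathcal L_{\Gamma'}}^{(\ell)}(z)$, the additive summand $(-1)^{q+1}z^{-(q+1)}$ and the common factor $(1-z^2)^{-(n-1)}$ in \eqref{eq1:F_Gamma^p-1-formula} do not depend on $\Gamma$ and therefore cancel. Hence the equalities $F_\Gamma^{q}(z)=F_{\Gamma'}^{q}(z)$ for $0\le q\le p_0$ are equivalent to the system of rational-function identities
\[
\sum_{\ell=0}^{n} A_{r}^{(\ell)}(z)\,D^{(\ell)}(z)=0 \qquad\text{for all }1\le r\le p_0+1 .
\]

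The heart of the matter is then a triangular change of basis in the variable $\ell$. Using the explicit formula \eqref{eq3:A_pl}, I would show that, viewed as a function of $\ell$ with $z$ fixed, $A_r^{(\ell)}(z)$ is a polynomial in $\ell$ of degree exactly $r-1$ whose leading coefficient is a nonzero element of $\Q(z)$. Granting this, the value-vectors $\bigl(A_r^{(\ell)}(z)\bigr)_{\ell=0}^{n}$ for $1\le r\le p_0+1$ and the power-vectors $\bigl(\ell^h\bigr)_{\ell=0}^{n}$ for $0\le h\le p_0$ are both bases, over $\Q(z)$, of the space of value-vectors of polynomials in $\ell$ of degree at most $p_0$ (evaluation at the $n+1\ge p_0+1$ points $\ell=0,\dots,n$ being injective on such polynomials). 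The two coefficient matrices are thus related by an invertible matrix over $\Q(z)$, so the above system is equivalent to $\sum_{\ell=0}^n \ell^h D^{(\ell)}(z)=0$ for all $0\le h\le p_0$, which is precisely \eqref{eq1:condition[0,p_0]-isosp}. Finally, \eqref{eq1:condition[0,p_0]-isospN_kl} is the coefficientwise form of \eqref{eq1:condition[0,p_0]-isosp}, obtained by expanding $\vartheta_{\mathcal L_\Gamma}^{(\ell)}(z)=\sum_{k\ge0}N_{\mathcal L_\Gamma}(k,\ell)z^k$ and comparing coefficients of $z^k$.

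I expect the main obstacle to be the degree-and-leading-coefficient claim for $A_r^{(\ell)}(z)$ as a polynomial in $\ell$: one must extract from the closed form \eqref{eq3:A_pl} that the $\ell$-dependence is polynomial of the stated degree $r-1$ and that its top coefficient does not vanish identically in $z$, since it is exactly the nonvanishing of this coefficient that guarantees the change of basis is invertible over $\Q(z)$ and hence that the two systems of conditions have the same solution set in the $D^{(\ell)}(z)$.
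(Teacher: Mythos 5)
Your proposal is correct and takes essentially the same route as the paper: both use \eqref{eq1:p-isosp} together with Theorem~\ref{thm:F_Gamma^p} to reduce the statement to the equivalence of the two linear systems $\sum_{\ell=0}^n A_{r}^{(\ell)}(z)\,D^{(\ell)}(z)=0$ for $1\le r\le p_0+1$ and $\sum_{\ell=0}^n \ell^h\,D^{(\ell)}(z)=0$ for $0\le h\le p_0$, where $D^{(\ell)}(z)=\vartheta_{\mathcal L_{\Gamma}}^{(\ell)}(z)-\vartheta_{\mathcal L_{\Gamma'}}^{(\ell)}(z)$, and both settle this by regarding $A_r^{(\ell)}(z)$ as a polynomial in the variable $\ell$. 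The degree-and-leading-coefficient claim you flag as the main obstacle is precisely what the paper compresses into Remark~\ref{rem3:degreesA}, and it does hold: in \eqref{eq3:A_pl} only the terms with $j=1$, $t=0$ attain $\ell$-degree $r-1$, and the coefficient of $z^{r-2}$ in the leading $\ell$-coefficient equals $\sum_{\beta=0}^{r-1}\tfrac{(-1)^{\beta}2^{r-1-\beta}}{\beta!\,(r-1-\beta)!}=\tfrac{1}{(r-1)!}\neq0$, so your change of basis is indeed invertible over $\Q(z)$.
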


It turns out that the geometric characterization for lens spaces $p$-isospectral for all $p$, proved in \cite[Thm.~3.6(ii)]{LMR-onenorm}, coincides with the previous result when $p_0=n-1$ (see Remark~\ref{rem3:extreme-characterizations}). Namely, for $\Gamma$ and $\Gamma'$ finite subgroups of $T$,

\vspace{2mm}
\noindent\stepcounter{equation}(\theequation)\hspace*{0.05\textwidth}%
\parbox{0.7\textwidth}{$\Gamma\ba S^{2n-1}$ and $\Gamma'\ba S^{2n-1}$ are $p$-isospectral for all $p$ if and only if $\mathcal L_\Gamma$ and $\mathcal L_{\Gamma'}$ are \emph{$\norma{\cdot}^*$-isospectral} (i.e.\ $\vartheta_{\mathcal L_{\Gamma}}^{(\ell)}(z) = \vartheta_{\mathcal L_{\Gamma'}}^{(\ell)}(z)$ for all $\ell$).}\\[2mm]
Here, $\norma{\cdot}^*$-isospectrality means $N_{\mathcal L_\Gamma}(k,\ell)=N_{\mathcal L_{\Gamma'}}(k,\ell)$ for all $k\geq0$ and all $0\leq \ell\leq n$.
Examples of lens spaces $p$-isospectral for all $p$ can be found in \cite{LMR-onenorm}, \cite{LMR-survey} and \cite{DeFordDoyle14}.

Ikeda's expression \cite[(2.13)]{Ikeda88} for $F_\Gamma^p(z)$ ensures that $F_\Gamma^p(z)$ is a rational function.
As a last main result we obtain explicit expressions for those polynomials in terms of the associated lattice $\mathcal L_\Gamma$. 
This expression is very useful for explicit computational purposes. 
Moreover, it is required only finitely many calculations to determine the rational expression. 
Consequently, the $p$-spectrum of $\Gamma\ba S^{2n-1}$ is determined by a finite part of it.

With the above goal in mind, we will show that $\vartheta_{\mathcal L}^{(\ell)}(z)$ has a rational expression for any $\ell$, obtaining the required expression for $F_\Gamma^p(z)$ from Theorem~\ref{thm:F_Gamma^p}. 
We first introduce some more notation.
For $q$ a positive integer and $\mathcal L\subset P(G)$, we define
\begin{align}
C(q) &=\{\mu=\textstyle\sum_{i}a_i\varepsilon_i\in P(G): |a_i|<q\quad\forall\,i\},\label{eq1:reduceterms} \\
N_{\mathcal L}^{\textrm{red}}(k,\ell) &= \#\{\mu\in C(q)\cap\mathcal L: \norma{\mu}=k,\;Z(\mu)=\ell\},\notag \\
\Phi_{\mathcal L}^{(\ell)}(z) &= 
	\textstyle \sum_{k\geq0} N_{\mathcal L}^{\textrm{red}}(k,\ell) z^k.\notag
\end{align}
It is important to note that $\Phi_{\mathcal L}^{(\ell)}(z)$ is actually a polynomial of degree at most $(n-\ell)(q-1)$.
Indeed, if $\norma{\mu}>(n-\ell)(q-1)$, then $\mu\not\in C(q)$.
Hence, $N_{\mathcal L}^{\textrm{red}}(k,\ell)=0$ for all $k>(n-\ell)(q-1)$.
We set $q(\Gamma)=\min\{m\in\N: \gamma^m=1\quad\forall\,\gamma\in\Gamma\}$.

\begin{theorem}\label{thm:theta^l-rational}
Let $\Gamma$ be a finite subgroup of $T$ with $q(\Gamma)=q$.
Then, for each $0\leq \ell\leq n$,
\begin{equation*}
\vartheta_{\mathcal L_\Gamma}^{(\ell)} (z)
=\frac{1}{(1-z^{q})^{n-\ell}}\sum_{s=0}^{n-\ell} 2^s\binom{\ell+s}{s} z^{sq} \,\Phi_{\mathcal L_\Gamma}^{(\ell+s)}(z).
\end{equation*}
\end{theorem}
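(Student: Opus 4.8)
The plan is to package the counting functions into bivariate generating functions in which a second variable $w$ records the statistic $Z(\mu)$ while $z$ records $\norma{\mu}$. Concretely, I would set
\[
\mathcal G_{\mathcal L_\Gamma}(w,z)=\sum_{\mu\in\mathcal L_\Gamma} w^{Z(\mu)}\,z^{\norma{\mu}}=\sum_{\ell=0}^{n} w^\ell\,\vartheta_{\mathcal L_\Gamma}^{(\ell)}(z),
\]
and likewise $\Phi_{\mathcal L_\Gamma}(w,z)=\sum_{\mu\in C(q)\cap\mathcal L_\Gamma} w^{Z(\mu)} z^{\norma{\mu}}=\sum_{\ell=0}^n w^\ell\,\Phi_{\mathcal L_\Gamma}^{(\ell)}(z)$; both are well defined in $\Z[w][[z]]$ since each coefficient of $z^k$ is a polynomial in $w$ of degree at most $n$. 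The asserted formula, read off coefficient by coefficient in $w^\ell$, is then equivalent to the single substitution identity
\[
\mathcal G_{\mathcal L_\Gamma}(w,z)=\frac{1}{(1-z^q)^n}\,\Phi_{\mathcal L_\Gamma}\!\big(2z^q+w(1-z^q),\,z\big),
\]
because expanding $\big(2z^q+w(1-z^q)\big)^m$ by the binomial theorem and collecting the coefficient of $w^\ell$ reproduces exactly the weight $2^s\binom{\ell+s}{s}z^{sq}/(1-z^q)^{n-\ell}$ with $s=m-\ell$. Thus the whole theorem reduces to proving this one substitution identity.

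The structural input is that $q=q(\Gamma)$ is the exponent of $\Gamma$, so $\gamma^{q\varepsilon_j}=1$ for every $\gamma\in\Gamma$ and every $j$; hence $q\Z^n\subseteq\mathcal L_\Gamma$, and $\mathcal L_\Gamma$ is a disjoint union of cosets $r+q\Z^n$ with $r$ ranging over the finite set $R:=\mathcal L_\Gamma\cap\{0,1,\dots,q-1\}^n$. Since both $Z(\mu)$ and $\norma{\mu}$ are coordinatewise statistics, the sum over each coset factors as a product over the $n$ coordinates. Writing $g_r(w,z)=\sum_{a\equiv r\,(q)} w^{\delta_{a,0}}z^{|a|}$ for the single-coordinate sum, I would compute, for $1\le r\le q-1$,
\[
g_r(z)=\frac{z^{r}+z^{q-r}}{1-z^{q}},\qquad\text{and}\qquad g_0(w,z)=w+\frac{2z^q}{1-z^q}=\frac{w(1-z^q)+2z^q}{1-z^q},
\]
so that $\mathcal G_{\mathcal L_\Gamma}(w,z)=\sum_{r\in R}\prod_{i=1}^n g_{r_i}(w,z)$. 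The same coset decomposition applied to $C(q)\cap\mathcal L_\Gamma$ — where a coordinate of residue $0$ admits only $a=0$, while a coordinate of residue $r\ne0$ admits exactly $a\in\{r,r-q\}$, all choices remaining in $\mathcal L_\Gamma$ by $q\Z^n$-periodicity — yields $\Phi_{\mathcal L_\Gamma}(w,z)=\sum_{r\in R}\prod_{i=1}^n \phi_{r_i}(w,z)$ with $\phi_0(w,z)=w$ and $\phi_r(z)=z^r+z^{q-r}$ for $r\ne0$.

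The identity then drops out coordinate by coordinate: for $r\ne0$ one has $g_r(z)=\phi_r(z)/(1-z^q)$, and for the zero residue the substitution turns $\phi_0$ into $g_0$, since $\phi_0\big(2z^q+w(1-z^q)\big)/(1-z^q)=\big(2z^q+w(1-z^q)\big)/(1-z^q)=g_0(w,z)$. Multiplying these $n$ per-coordinate relations and summing over $r\in R$ gives $\tfrac{1}{(1-z^q)^n}\Phi_{\mathcal L_\Gamma}\!\big(2z^q+w(1-z^q),z\big)=\mathcal G_{\mathcal L_\Gamma}(w,z)$, and extracting the coefficient of $w^\ell$ finishes the proof. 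I expect the only delicate point to be the asymmetric treatment of the zero residue: in the full lattice a zero-residue coordinate contributes not only the genuine zero (recorded by $w$) but also the nonzero multiples $\pm q,\pm2q,\dots$, which do not affect $Z$, and it is precisely this extra $2z^q/(1-z^q)$ that is repackaged by the substitution $w\mapsto 2z^q+w(1-z^q)$. Verifying that this single substitution simultaneously produces the binomial coefficients $2^s\binom{\ell+s}{s}$ and leaves the $r\ne0$ factors untouched is the crux; the remaining steps are routine bookkeeping.
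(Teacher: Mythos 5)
Your proof is correct, and it takes a genuinely different route from the paper's. The paper does not reprove the key counting fact: it quotes from \cite[Thm.~4.2]{LMR-onenorm} the identity expressing $N_{\mathcal L}(aq+r,\ell)$ in terms of the reduced counts $N_{\mathcal L}^{\mathrm{red}}((a-t)q+r,\ell+s)$ with weights $2^s\binom{\ell+s}{s}\binom{t-s+n-\ell-1}{n-\ell-1}$, checks that $\mathcal L_\Gamma$ has the required $q\Z^n$-periodicity, and then derives the theorem for each fixed $\ell$ by substituting that identity into $\vartheta_{\mathcal L_\Gamma}^{(\ell)}(z)$ and resumming via $\sum_{k\geq0}\binom{k+n-\ell-1}{n-\ell-1}z^{kq}=(1-z^q)^{-(n-\ell)}$. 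You instead give a self-contained argument: the coset decomposition $\mathcal L_\Gamma=\bigsqcup_{r\in R}(r+q\Z^n)$, together with the fact that $\norma{\mu}$ and $Z(\mu)$ are sums of single-coordinate statistics, factors each coset sum into a product of one-variable series, and your bivariate substitution $w\mapsto 2z^q+w(1-z^q)$ handles all $\ell$ simultaneously---in effect you reprove the quoted identity of \cite{LMR-onenorm} rather than cite it. Your reduction of the theorem to the substitution identity is also sound: the coefficient of $w^\ell$ in $\bigl(2z^q+w(1-z^q)\bigr)^{\ell+s}$ is $\binom{\ell+s}{\ell}2^sz^{sq}(1-z^q)^\ell$, which after division by $(1-z^q)^n$ reproduces exactly the stated weights. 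As for what each approach buys: the paper's proof is short because the hard combinatorics was done in earlier work, and it stays within the toolkit used throughout the paper; yours is more elementary and transparent (the factor $2^s\binom{\ell+s}{s}z^{sq}$ visibly arises from $s$ of the zero-residue coordinates choosing the tails $\pm q,\pm 2q,\dots$ instead of the genuine zero), and it yields Corollary~\ref{cor1:theta-rational} for free: setting $w=1$ turns the right-hand side into $\Phi_{\mathcal L_\Gamma}(1+z^q,z)/(1-z^q)^n$, whose binomial expansion is precisely that corollary, bypassing the rearrangements of sums in the paper's separate proof.
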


It was already shown in \cite[Thm.~3.9]{Lauret-spec0cyclic}, by using Ehrhart's theory for counting integral points in rational polytopes, that $\vartheta_{\mathcal L_\Gamma}(z)$ has a rational expression.
Actually, the author has recently found the article \cite{Sole95}, where a more general result was proved with the identical method.
The next corollary refines these results by giving an explicit expression for the polynomial at the numerator in terms of the polynomials $\Phi_{\mathcal L_\Gamma}^{(\ell)}(z)$.

\begin{corollary}\label{cor1:theta-rational}
Let $\Gamma$ be a finite subgroup of $T$ with $q(\Gamma)=q$.
Then,
\begin{equation*}
\vartheta_{\mathcal L_\Gamma}(z)
=\frac{1}{(1-z^{q})^{n}} \, \displaystyle\sum_{t=0}^{n} z^{tq}\sum_{\ell=t}^n \binom{\ell}{t}  \Phi_{\mathcal L_\Gamma}^{(\ell)}(z).
\end{equation*}
\end{corollary}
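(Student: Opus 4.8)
The plan is to obtain the claimed formula simply by summing the rational expression of Theorem~\ref{thm:theta^l-rational} over $\ell$ and collapsing the result with the binomial theorem. First I would observe that, for each $k\geq0$, the invariant $Z(\mu)$ partitions the set $\{\mu\in\mathcal L_\Gamma:\norma{\mu}=k\}$ according to its value $\ell\in\{0,\dots,n\}$, so that $N_{\mathcal L_\Gamma}(k)=\sum_{\ell=0}^n N_{\mathcal L_\Gamma}(k,\ell)$, and therefore, at the level of generating functions,
\begin{equation*}
\vartheta_{\mathcal L_\Gamma}(z)=\sum_{\ell=0}^n \vartheta_{\mathcal L_\Gamma}^{(\ell)}(z).
\end{equation*}

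Next I would substitute the expression from Theorem~\ref{thm:theta^l-rational} for each $\vartheta_{\mathcal L_\Gamma}^{(\ell)}(z)$ and bring every summand to the common denominator $(1-z^q)^n$ by multiplying numerator and denominator of the $\ell$-th term by $(1-z^q)^{\ell}$. This gives
\begin{equation*}
\vartheta_{\mathcal L_\Gamma}(z)=\frac{1}{(1-z^q)^n}\sum_{\ell=0}^n (1-z^q)^\ell \sum_{s=0}^{n-\ell} 2^s\binom{\ell+s}{s} z^{sq}\,\Phi_{\mathcal L_\Gamma}^{(\ell+s)}(z).
\end{equation*}

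The key step is to re-index the double sum by $m=\ell+s$, so that only $\Phi_{\mathcal L_\Gamma}^{(m)}(z)$ occurs for each $m\in\{0,\dots,n\}$, with $\ell$ then ranging over $0\leq\ell\leq m$. Using $\binom{\ell+s}{s}=\binom{m}{\ell}$ and $2^{s}z^{sq}=(2z^q)^{m-\ell}$, the coefficient of $\Phi_{\mathcal L_\Gamma}^{(m)}(z)$ becomes
\begin{equation*}
\sum_{\ell=0}^m \binom{m}{\ell} (1-z^q)^\ell\,(2z^q)^{m-\ell}=\bigl((1-z^q)+2z^q\bigr)^m=(1+z^q)^m
\end{equation*}
by the binomial theorem, the crucial cancellation being $(1-z^q)+2z^q=1+z^q$. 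Hence $\vartheta_{\mathcal L_\Gamma}(z)=(1-z^q)^{-n}\sum_{m=0}^n (1+z^q)^m\,\Phi_{\mathcal L_\Gamma}^{(m)}(z)$. Finally I would expand $(1+z^q)^m=\sum_{t=0}^m \binom{m}{t} z^{tq}$, interchange the $m$- and $t$-summations, and rename $m$ as $\ell$, which yields the stated formula. There is no genuine obstacle here: the argument is a direct summation of the previous theorem, and the only point requiring care is the bookkeeping in the index change $m=\ell+s$, after which the collapse to $(1+z^q)^m$ makes the two presentations of the numerator agree.
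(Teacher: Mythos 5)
Your proof is correct, and while it follows the paper's overall strategy --- sum the identity of Theorem~\ref{thm:theta^l-rational} over $\ell$ using $\vartheta_{\mathcal L_\Gamma}(z)=\sum_{\ell=0}^n\vartheta_{\mathcal L_\Gamma}^{(\ell)}(z)$ and clear denominators to $(1-z^q)^n$ --- the resummation itself is organized along a genuinely different, and cleaner, path. The paper expands $(1-z^q)^\ell$ into its alternating binomial sum, re-indexes by $t=r+s$, invokes the identity $\binom{\ell}{s}\binom{\ell-s}{t-s}=\binom{\ell}{t}\binom{t}{s}$, and only at the very end kills all the signs through $\sum_{s=0}^{t}\binom{t}{s}2^s(-1)^{t-s}=(2-1)^t=1$. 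You instead group the double sum by $m=\ell+s$ first, so that the coefficient of each $\Phi_{\mathcal L_\Gamma}^{(m)}(z)$ collapses in one stroke by the binomial theorem, $\sum_{\ell=0}^m\binom{m}{\ell}(1-z^q)^\ell(2z^q)^{m-\ell}=(1+z^q)^m$; no alternating signs ever appear, and you obtain the intermediate closed form $\vartheta_{\mathcal L_\Gamma}(z)=(1-z^q)^{-n}\sum_{m=0}^n(1+z^q)^m\Phi_{\mathcal L_\Gamma}^{(m)}(z)$, which the paper never writes down and which is arguably the more memorable statement (the corollary's formula is just its expansion in powers of $z^q$). Your bookkeeping is also self-contained: since $s\leq n-\ell$ forces $m\leq n$, you never need the convention $\Phi_{\mathcal L_\Gamma}^{(\ell)}(z)=0$ for $\ell>n$ that the paper introduces to justify its change of variables. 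The trade-off is negligible --- both proofs are short --- but yours isolates the single cancellation $(1-z^q)+2z^q=1+z^q$ as the reason the formula holds, whereas in the paper that mechanism is spread across two changes of variables and two binomial identities.
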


\section{Spectra on forms of lens spaces}\label{sec:proofs}
In this section we prove the statements in Section~\ref{sec:results}. 
We first describe in Theorem~\ref{thm:eigenvalue-multiplicities} the spectrum of the Hodge-Laplace operator $\Delta_{\Gamma,p}$ acting on $p$-forms of $\Gamma\ba S^{2n-1}$ with $\Gamma\subset T$.
Then, Theorem~\ref{thm:F_Gamma^p}, Corollary~\ref{cor:charact[0,p]}, Theorem~\ref{thm:theta^l-rational} and Corollary~\ref{cor1:theta-rational} are proved.
We first set up notation.

\begin{notation}\label{notation}
We recall that $G=\SO(2n)$, thus its Lie algebra and complexified Lie algebra are $\mathfrak g=\so(2n)$ and $\mathfrak g_\C=\so(2n,\C)$. 
We pick the Cartan subalgebra
\begin{equation*}
\mathfrak h:=\left\{ 
\diag\left(
\left[\begin{smallmatrix}0&- \theta_1\\  \theta_1&0\end{smallmatrix}\right]
, \dots,
\left[\begin{smallmatrix}0&- \theta_n\\  \theta_n&0\end{smallmatrix}\right]
\right)
:\theta_j\in\C\;\forall \, j
\right\}
\end{equation*}
of $\mathfrak g_\C$. 
For any $1\leq j\leq n$, let $\varepsilon_j\in \mathfrak h^*$ given by
$ 
\varepsilon_j \left(\diag\left(
\left[\begin{smallmatrix}0&- \theta_1\\  \theta_1&0\end{smallmatrix}\right]
, \dots,
\left[\begin{smallmatrix}0&- \theta_n\\  \theta_n&0\end{smallmatrix}\right]
\right)
\right)= i\theta_j.
$ 
Therefore, the root system associated to $(\mathfrak g_\C,\mathfrak h)$ (i.e.\ the root system of type $\mathrm{D}_n$) is given by
$
\Sigma(\mathfrak g_\C,\mathfrak h):= \{\pm\varepsilon_i\pm\varepsilon_j: 1\leq i<j\leq n\},
$
the lattice of $G$-integral weights is $P(G):=\bigoplus_{j=1}^n\Z\varepsilon_j$ and $\mu= \sum_{j=1}^n a_j\varepsilon_j\in P(G)$ is dominant if and only if $a_1\geq \dots\geq a_{n-1}\geq |a_n|$. 
\end{notation}

The following lemma, proved in \cite[Thm.~IV.1]{LR-fundstring}, gives a closed explicit expression for the multiplicity $m_{\pi_{k,p}}(\mu)$ of the weight $\mu$ in the representation $\pi_{k,p}$.
The formula depends only on the one-norm $\norma{\mu}$ of $\mu$ and the number of zero entries of $\mu$, denoted by $Z(\mu)$ (see \eqref{eq:normaZ(mu)}).
This fact was already shown in \cite[Lem.~3.3]{LMR-onenorm}.
We shall use the convention $\binom{b}{a}=0$ if $b<a$ or $a<0$.

\begin{lemma}\label{thm1:multip(k,p)}
	Let $k\geq0$, $1\leq p\leq n$ and let $\mu\in P(G)$.
	Write $r(\mu)=(k+p-\norma{\mu})/2$.
	If $r(\mu)$ is a non-negative integer, then
	\begin{align*}
		m_{\pi_{k,p}}(\mu)
		&= \sum_{j=1}^{p} (-1)^{j-1}  \sum_{t=0}^{\lfloor\frac{p-j}{2}\rfloor} \binom{n-p+j+2t}{t}  \sum_{\beta=0}^{p-j-2t} 2^{p-j-2t-\beta} \binom{n-Z(\mu)}{\beta} \binom{Z(\mu)}{p-j-2t-\beta}  \\
		&\quad \sum_{\alpha=0}^\beta \binom{\beta}{\alpha} \sum_{i=0}^{j-1} \binom{r(\mu)-i-p+\alpha+t+j+n-2}{n-2},
	\end{align*}
	and $m_{\pi_{k,p}}(\mu)=0$ otherwise.
\end{lemma}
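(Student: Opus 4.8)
The plan is to reduce the computation of $m_{\pi_{k,p}}(\mu)$ to the elementary weight multiplicities of two kinds of modules: the harmonic spaces $V_{m\varepsilon_1}$ and the exterior powers $\bigwedge^{p'}(\C^{2n})$. For the former, a weight $\sigma$ occurs in $\operatorname{Sym}^m(\C^{2n})$ with multiplicity $\binom{(m-\norma{\sigma})/2+n-1}{n-1}$ whenever $(m-\norma{\sigma})/2$ is a non-negative integer (one distributes the excess degree $r=(m-\norma{\sigma})/2$ among the $n$ coordinate pairs), so from $\operatorname{Sym}^{m}(\C^{2n})\simeq V_{m\varepsilon_1}\oplus\operatorname{Sym}^{m-2}(\C^{2n})$ one gets $m_{V_{m\varepsilon_1}}(\sigma)=\binom{(m-\norma{\sigma})/2+n-2}{n-2}$; this is the origin of the innermost binomial $\binom{\cdots+n-2}{n-2}$. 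For the latter, the weights of $\bigwedge^{p'}(\C^{2n})$ lie in $\{-1,0,1\}^n$, and a weight $\nu$ occurs with multiplicity $\binom{Z(\nu)}{(p'-\norma{\nu})/2}$, obtained by placing a single basis vector on each nonzero coordinate of $\nu$ and a conjugate pair on $(p'-\norma{\nu})/2$ of its zero coordinates.

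Next I would express $\pi_{k,p}$, in the representation ring, as an explicit alternating combination of the modules $V_{m\varepsilon_1}\otimes\bigwedge^{p'}(\C^{2n})$. Since the Cartan component of $V_{m\varepsilon_1}\otimes\bigwedge^{p'}(\C^{2n})$ is exactly $\pi_{m,p'}=V_{m\varepsilon_1+\Lambda_{p'}}$, inverting the (triangular, $\SO(2n)$ Pieri-type) decomposition of these tensor products should produce a signed formula of the shape $\pi_{k,p}=\sum_{j\ge1}(-1)^{j-1}\,V_{(k+\cdots)\varepsilon_1}\otimes\bigwedge^{(p-\cdots)}(\C^{2n})$; this is the source of the outer alternating sum $\sum_{j=1}^{p}(-1)^{j-1}$ and of the summation over $t$. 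Evaluating each term at $\mu$ as a convolution $\sum_{\nu}m_{\bigwedge^{p'}}(\nu)\,m_{V_{m'\varepsilon_1}}(\mu-\nu)$ and organizing the admissible $\nu$ according to how many nonzero coordinates of $\mu$ receive an aligned unit ($\beta$, with $\alpha$ of them anti-aligned) and how many zero coordinates receive a signed unit (which accounts for the factor $2^{\,p-j-2t-\beta}$ and for the binomials $\binom{n-Z(\mu)}{\beta}$, $\binom{Z(\mu)}{p-j-2t-\beta}$ and $\binom{\beta}{\alpha}$) reproduces the full nested structure of the stated expression.

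Finally I would substitute the elementary multiplicities into each convolution and collect terms, so that the excess degree of the harmonic factor becomes $r(\mu)-i-p+\alpha+t+j$ and the innermost sum over $i$ from $0$ to $j-1$ emerges. The vanishing unless $r(\mu)=(k+p-\norma{\mu})/2$ is a non-negative integer is then immediate: every weight of $\pi_{k,p}$ satisfies $\norma{\mu}\equiv k+p\pmod 2$, and, since $\norma{\cdot}$ is convex and invariant under sign changes of coordinates while the weights lie in the convex hull of the signed-permutation orbit of the highest weight, also $\norma{\mu}\le k+p$.

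The main obstacle is the middle step: establishing the precise decomposition of $V_{m\varepsilon_1}\otimes\bigwedge^{p'}(\C^{2n})$ as an $\SO(2n)$-module — in particular that only modules of the \emph{fundamental string} type $\pi_{m',p''}$ occur — and then inverting this to obtain the exact signed combination with the correct ranges of $j$ and $t$. This is where essentially all of the representation-theoretic content lies; once it is in hand, the remaining work is the lengthy but routine manipulation of binomial sums. An alternative engine for the same formula would be Kostant's multiplicity formula for $\mathrm{D}_n$, writing $m_{\pi_{k,p}}(\mu)$ as an alternating sum over the Weyl group of values of the partition function; the tensor-product route is preferable here, however, because the splitting of coordinates into zero and nonzero ones — hence the dependence on $Z(\mu)$ and the binomials $\binom{n-Z(\mu)}{\beta}$ and $\binom{Z(\mu)}{\cdots}$ — arises transparently from the convolution, whereas it is hidden inside the partition-function values.
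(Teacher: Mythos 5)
First, a point of reference: the paper does not prove this lemma at all --- it is quoted from \cite[Thm.~IV.1]{LR-fundstring} --- so your proposal can only be measured against that external source and on its own terms. Your elementary ingredients are correct: the weight $\sigma$ occurs in $\operatorname{Sym}^m(\C^{2n})$ with multiplicity $\binom{r+n-1}{n-1}$ where $r=(m-\norma{\sigma})/2$, hence $m_{V_{m\varepsilon_1}}(\sigma)=\binom{r+n-2}{n-2}$ by the harmonic splitting; the weights of $\bigwedge^{p'}(\C^{2n})$ lie in $\{-1,0,1\}^n$ with multiplicity $\binom{Z(\nu)}{(p'-\norma{\nu})/2}$; and the vanishing criterion (parity of $\norma{\mu}$ and the bound $\norma{\mu}\leq k+p$ via Weyl-invariance and convexity of the one-norm) is right. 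The architecture you predict --- an alternating sum over $j$ from inverting a Pieri-type relation, a sum over $t$ from orthogonal contraction terms, and the $\beta,\alpha$ bookkeeping from splitting the convolution according to the zero and nonzero coordinates of $\mu$ --- is indeed the natural route and is consistent with the shape of the stated formula.

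Nevertheless, as a proof the proposal has a genuine gap, and you name it yourself: the exact identity in the representation ring expressing $\pi_{k,p}$ as a signed combination of the modules $V_{m\varepsilon_1}\otimes\bigwedge^{p'}(\C^{2n})$ is never established; you only assert that inverting a triangular system ``should produce'' a formula of the right shape. Essentially all of the content of the lemma sits exactly there: the coefficients $\binom{n-p+j+2t}{t}$, the sign $(-1)^{j-1}$, the ranges $1\leq j\leq p$ and $0\leq t\leq\lfloor\frac{p-j}{2}\rfloor$, and the inner sum $\sum_{i=0}^{j-1}$ cannot be read off from the expected shape; they must come out of (i) the full decomposition of $V_{m\varepsilon_1}\otimes\bigwedge^{p'}(\C^{2n})$ into fundamental-string irreducibles, which requires orthogonal Pieri/Littlewood restriction rules including the non-stable range where $p'$ is close to $n$, and the reducibility $\bigwedge^n(\C^{2n})$, which is the very reason $\pi_{k,n}$ is defined as $\pi_{k\varepsilon_1+\Lambda_n}\oplus\pi_{k\varepsilon_1+\bar\Lambda_n}$ --- a case your argument never addresses; and (ii) an actual inversion of that system followed by the binomial-sum collection you defer as ``routine''. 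Since none of these steps is carried out, what you have is a plausible plan (apparently in the spirit of the cited source) rather than a proof; completing it requires proving the decomposition-inversion identity and verifying that the collected coefficients are precisely those in the statement.
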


For $\Gamma\subset T$ finite, $0\leq\ell\leq n$ and $k\geq1$, we set
\begin{multline}\label{eq3:M_Gamma(k,p)}
M_\Gamma(k,p)
= \sum_{\ell=0}^n \sum_{r=0}^{\lfloor\frac{k-1+p}{2}\rfloor} N_{\mathcal L_\Gamma}(k-1+p-2r,\ell)
\sum_{j=1}^{p} (-1)^{j-1}  \sum_{t=0}^{\lfloor\frac{p-j}{2}\rfloor} \binom{n-p+j+2t}{t}  \\
\sum_{\beta=0}^{p-j-2t} 2^{p-j-2t-\beta} \binom{n-\ell}{\beta} \binom{\ell}{p-j-2t-\beta}  \sum_{\alpha=0}^\beta \binom{\beta}{\alpha}
\sum_{i=0}^{j-1} \binom{r-i-p+j+\alpha+t+n-2}{n-2}.
\end{multline}
We recall from \eqref{eq1:lambda_(k,p)} that $\lambda_{k,p} =(k+p)(k+2n-2-p)$ if $0\leq p\leq n-1$, and $\lambda_{k,-1}=0$.

\begin{theorem}\label{thm:eigenvalue-multiplicities}
Let $\Gamma$ be a finite subgroup of $T$ and $0\leq p\leq n-1$.
Each eigenvalue in $\spec_p(\Gamma\ba S^{2n-1})$ is of the form $\lambda_{k,p-1}$ or $\lambda_{k,p}$ for some $k\geq1$, with multiplicities given by
\begin{equation*}
\mult_{\Delta_{\Gamma,p}} (\lambda_{k,p-1})= M_\Gamma(k,p-1)
\qquad\text{and}\qquad
\mult_{\Delta_{\Gamma,p}} (\lambda_{k,p})=M_\Gamma(k,p).
\end{equation*}
In particular, the $0$-spectrum of $\Gamma\ba S^{2n-1}$ has eigenvalues $k(k+2n-2)$ for any $k\geq0$, with
\begin{align*}
\mult_{\Delta_{\Gamma,0}} \big(k(k+2n-2)\big)
&= \sum_{r=0}^{\lfloor k/2\rfloor} N_{\mathcal L_\Gamma}(k-2r) \binom{r+n-2}{n-2}.
\end{align*}
\end{theorem}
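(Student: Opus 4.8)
The plan is to combine the three ingredients that have just been assembled: the spectral description in Theorem~\ref{thm1:spectrum-general}, the dimension-counting formula \eqref{eq1:dimV_pi_kp+1}, and the closed weight-multiplicity expression of Lemma~\ref{thm1:multip(k,p)}. By Theorem~\ref{thm1:spectrum-general}, the eigenvalues of $\Delta_{\Gamma,p}$ are among $\lambda_{k,p-1}$ and $\lambda_{k,p}$ for $k\geq1$, with multiplicities $\dim V_{\pi_{k-1,p}}^\Gamma$ and $\dim V_{\pi_{k-1,p+1}}^\Gamma$ respectively. So the entire statement reduces to proving the single identity $\dim V_{\pi_{k-1,p+1}}^\Gamma = M_\Gamma(k,p)$ for all $k\geq1$ and $0\leq p\leq n-1$ (together with its shifted companion for $\lambda_{k,p-1}$, which is the same identity with $p$ replaced by $p-1$).

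To establish that identity I would start from \eqref{eq1:dimV_pi_kp+1}, which gives $\dim V_{\pi_{k-1,p+1}}^\Gamma = \sum_{\mu\in\mathcal L_\Gamma} m_{\pi_{k-1,p+1}}(\mu)$, and substitute the explicit formula of Lemma~\ref{thm1:multip(k,p)} (applied with $k$ replaced by $k-1$ and $p$ replaced by $p+1$). The key observation, already recorded after \eqref{eq:normaZ(mu)}, is that $m_{\pi_{k,p}}(\mu)$ depends on $\mu$ only through $\norma{\mu}$ and $Z(\mu)$. Therefore the sum over $\mu\in\mathcal L_\Gamma$ can be reorganized as a sum over the possible values of these two statistics, weighted by the counting numbers $N_{\mathcal L_\Gamma}(k',\ell)$ from \eqref{eq1:N_L(k)N_L(k,l)}: I would group the lattice points according to $Z(\mu)=\ell$ (with $0\leq\ell\leq n$) and $\norma{\mu}$. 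In Lemma~\ref{thm1:multip(k,p)} the multiplicity vanishes unless $r(\mu)=(k-1+(p+1)-\norma{\mu})/2 = (k+p-\norma{\mu})/2$ is a non-negative integer; setting $r=r(\mu)$ forces $\norma{\mu}=k+p-2r$, so summing over $r$ from $0$ to $\lfloor(k+p)/2\rfloor$ recovers the admissible one-norms. Matching these ranges to the outer sums in \eqref{eq3:M_Gamma(k,p)} — where $\norma{\mu}$ appears as $k-1+p-2r$ after the appropriate reindexing — should produce exactly the definition of $M_\Gamma(k,p)$ once $Z(\mu)=\ell$ is substituted into the binomial factors $\binom{n-Z(\mu)}{\beta}\binom{Z(\mu)}{\cdots}$.

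The mechanical part of the proof is therefore a careful bookkeeping exercise: verifying that the inner alternating sum over $j$, $t$, $\beta$, $\alpha$, $i$ in \eqref{eq3:M_Gamma(k,p)} is precisely the one in Lemma~\ref{thm1:multip(k,p)} under the index substitution $(k,p)\mapsto(k-1,p+1)$, and that the parity/range constraints on $r$ line up. I expect the main obstacle to be exactly this index-shift accounting — tracking how the argument $k$ of the theorem differs by one from the $k$ appearing inside $\pi_{k-1,p+1}$, and confirming that the degree-$(n-2)$ binomial $\binom{r-i-p+j+\alpha+t+n-2}{n-2}$ in $M_\Gamma$ coincides with $\binom{r(\mu)-i-p+\alpha+t+j+n-2}{n-2}$ from the lemma after replacing $p$ by $p+1$ and $r(\mu)$ by the corresponding $r$. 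One must check the off-by-one shifts do not disturb the $\binom{b}{a}=0$ convention for out-of-range arguments.

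Finally, the displayed $0$-spectrum formula is the specialization $p=0$. Here $\Delta_{\Gamma,0}$ is the Laplace–Beltrami operator, the eigenvalues are $\lambda_{k,0}=k(k+2n-2)$, and the multiplicity is $\dim V_{\pi_{k-1,1}}^\Gamma$. Since $\pi_{k-1,1}=\pi_{k\varepsilon_1}$ (the harmonic polynomials of degree $k$ restricted appropriately) and the case $p=1$ of Lemma~\ref{thm1:multip(k,p)} collapses — only $j=1$ survives, forcing $t=\beta=\alpha=i=0$ — the inner sum reduces to the single term $\binom{r+n-2}{n-2}$. Summing $\sum_{\mu\in\mathcal L_\Gamma}$ and regrouping by $\norma{\mu}=k-2r$ then yields $\sum_{r=0}^{\lfloor k/2\rfloor} N_{\mathcal L_\Gamma}(k-2r)\binom{r+n-2}{n-2}$, which recovers the known $0$-spectrum formula \eqref{eq1:F_Gamma^0=rationalfcn} and serves as a consistency check on the general reduction.
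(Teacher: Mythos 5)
Your strategy coincides with the paper's own proof: reduce via Theorem~\ref{thm1:spectrum-general} and \eqref{eq1:dimV_pi_kp+1} to computing $\dim V_{\pi_{k,p}}^\Gamma=\sum_{\mu\in\mathcal L_\Gamma}m_{\pi_{k,p}}(\mu)$, then use the two key features of Lemma~\ref{thm1:multip(k,p)} (vanishing unless $k+p-\norma{\mu}\in2\Z_{\geq0}$, and dependence of $m_{\pi_{k,p}}(\mu)$ only on $\norma{\mu}$ and $Z(\mu)$) to collapse the lattice sum into the counting numbers $N_{\mathcal L_\Gamma}(\cdot,\ell)$; your $p=0$ specialization (only $j=1$, $t=\beta=\alpha=i=0$ survive) is also exactly right.

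However, the one step you defer --- checking that ``the off-by-one shifts do not disturb'' the matching with \eqref{eq3:M_Gamma(k,p)} --- is precisely where the argument does not close as you state it. Applying Lemma~\ref{thm1:multip(k,p)} with $(k,p)$ replaced by $(k-1,p+1)$ produces the alternating sum with $p+1$ throughout (sum over $1\leq j\leq p+1$, binomials $\binom{n-p-1+j+2t}{t}$, $\binom{\ell}{p+1-j-2t-\beta}$, inner binomial $\binom{r-i-p-1+\alpha+t+j+n-2}{n-2}$) and the admissible norms $\norma{\mu}=k+p-2r$, whereas \eqref{eq3:M_Gamma(k,p)} has inner sums in $p$ and norm arguments $k-1+p-2r$. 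These cannot be reconciled by reindexing $r$: the two families of norms have opposite parities. What your computation actually yields is, in the notation of \eqref{eq3:M_Gamma(k,p)}, the quantity $M_\Gamma(k,p+1)$; equivalently, \eqref{eq3:M_Gamma(k,p)} as written equals $\dim V_{\pi_{k-1,p}}^\Gamma$, so the correct identities are $\mult_{\Delta_{\Gamma,p}}(\lambda_{k,p-1})=M_\Gamma(k,p)$ and $\mult_{\Delta_{\Gamma,p}}(\lambda_{k,p})=M_\Gamma(k,p+1)$, i.e.\ the theorem's display is shifted by one in the second argument relative to the definition of $M_\Gamma$. This is a defect of the statement rather than of your method: note that the ``in particular'' formula for $p=0$ equals $M_\Gamma(k,1)$ (while $M_\Gamma(k,0)$ is an empty sum), and the proof of Theorem~\ref{thm:F_Gamma^p} uses $F_\Gamma^{p-1}(z)=\sum_{k\geq0}M_\Gamma(k+1,p)z^k$, both consistent only with the shifted reading. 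A complete write-up must therefore either prove the shifted identities or re-normalize the definition of $M_\Gamma$, rather than assert that the substitution ``should produce exactly the definition of $M_\Gamma(k,p)$''; the paper's proof, which ends with ``the rest follows,'' glosses over the same point.
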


\begin{proof}
From Theorem~\ref{thm1:spectrum-general}, it suffices to calculate $\dim V_{\pi_{k,p}}^\Gamma$ for $\Gamma\subset T$ finite and furthermore, \eqref{eq1:dimV_pi_kp+1} implies that $\dim V_{\pi_{k,p}}^\Gamma = \sum_{\mu\in\mathcal L_\Gamma} m_{\pi_{k,p}}(\mu)$.
Lemma~\ref{thm1:multip(k,p)} ensures that $m_{\pi_{k,p}}(\mu)=0$ if $k+p-\norma{\mu}\not\in2\Z_{\geq0}$, and moreover $m_{\pi_{k,p}}(\mu)=m_{\pi_{k,p}}(\mu')$ for $\mu$ and $\mu'$ satisfying $\norma{\mu}=\norma{\mu'}$ and $Z(\mu)=Z(\mu')$.
Thus,
\begin{align*}
\dim V_{\pi_{k,p}}^\Gamma
&= \sum_{\ell=0}^n  \sum_{r=0}^{\lfloor\frac{k+p}{2}\rfloor} \sum_{\mu\in\mathcal L_\Gamma:Z(\mu)=\ell\atop \norma{\mu}=k+p-2r} m_{\pi_{k,p}}(\mu)
= \sum_{\ell=0}^n  \sum_{r=0}^{\lfloor\frac{k+p}{2}\rfloor} N_{\mathcal L}(k+p-2r,\ell) \, m_{\pi_{k,p}}(\mu_0),
\end{align*}
where $\mu_0$ is any weight satisfying $\norma{\mu_0}=k+p-2r$ and $Z(\mu_0)=\ell$.
The rest follows by the formula for $m_{\pi_{k,p}}(\mu_0)$ in Lemma~\ref{thm1:multip(k,p)}.
\end{proof}

For $1\leq p\leq n-1$ and $0\leq \ell\leq n$, we set
\begin{align}\label{eq3:A_pl}
A_{p}^{(\ell)}(z) &= \sum_{j=1}^{p} (-1)^{j-1}  \sum_{t=0}^{\lfloor\frac{p-j}{2}\rfloor} \binom{n-p+j+2t}{t}
\\ &\quad
\sum_{\beta=0}^{p-j-2t} 2^{p-j-2t-\beta} \binom{n-\ell}{\beta} \binom{\ell}{p-j-2t-\beta}
\sum_{\alpha=0}^\beta \binom{\beta}{\alpha} \sum_{i=0}^{j-1} z^{p-2(j+t+\alpha-i)}. \notag
\end{align}

\begin{remark}\label{rem3:degreesA}
One can check that $A_{p}^{(\ell)}(z)$ is a Laurent polynomial of degree in between $-p$ and $p-2$.
Furthermore, $A_{p}^{(\ell)}(z)$ is a polynomial on the variable $\ell$, with coefficients in the space of complex Laurent polynomials on $z$, of degree at most $p$.
\end{remark}

\begin{proof}[Proof of Theorem~\ref{thm:F_Gamma^p}]
This proof is quite lengthy, so it will contain several claims in order to facilitate the reading.
Write $\mathcal L=\mathcal L_\Gamma$.
Set
\begin{align}
\Upsilon_{m}^{(\ell)}(z) &= \sum_{h=0}^{m} z^h \sum_{s=0}^{\lfloor\frac{h}{2}\rfloor} N_{\mathcal L}(h-2s,\ell)\binom{s+n-2}{n-2}, \label{eq3:Upsilon_m^l}\\
\label{eq3:B_Gammapl}
B_{\Gamma,p}^{(\ell)}(z) &= \sum_{j=1}^{p} (-1)^{j-1}  \sum_{t=0}^{\lfloor\frac{p-j}{2}\rfloor} \binom{n-p+j+2t}{t}
\sum_{\beta=0}^{p-j-2t} 2^{p-j-2t-\beta} \binom{n-\ell}{\beta}
\\ &\quad
\binom{\ell}{p-j-2t-\beta}
\sum_{\alpha=0}^\beta \binom{\beta}{\alpha} \sum_{i=0}^{j-1} z^{p-2(j+t+\alpha-i)} \;\Upsilon_{2(j+t+\alpha-i)-p-1}^{(\ell)}(z).
\notag
\end{align}

\begin{claim}\label{claim:F_Gamma^p-1}
$
\displaystyle F_\Gamma^{p-1}(z) = \sum_{\ell=0}^{n} \left(\frac{\vartheta_{\mathcal L}^{(\ell)}(z)}{(1-z^2)^{n-1}} \; A_{p}^{(\ell)}(z) -B_{\Gamma,p}^{(\ell)}(z)\right).
$
\end{claim}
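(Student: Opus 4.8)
The plan is to start from the definition $F_\Gamma^{p-1}(z)=\sum_{k\geq0}\dim V_{\pi_{k,p}}^\Gamma z^k$ in \eqref{eq1:F_Gamma^p} and substitute the expression for $\dim V_{\pi_{k,p}}^\Gamma$ extracted in the proof of Theorem~\ref{thm:eigenvalue-multiplicities}, namely $\dim V_{\pi_{k,p}}^\Gamma=\sum_{\ell=0}^n\sum_{r=0}^{\lfloor(k+p)/2\rfloor}N_{\mathcal L}(k+p-2r,\ell)\,m_{\pi_{k,p}}(\mu_0)$, together with the closed formula for $m_{\pi_{k,p}}(\mu_0)$ from Lemma~\ref{thm1:multip(k,p)} (in which $r(\mu_0)=r$). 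Since the combinatorial coefficients indexed by $j,t,\beta,\alpha,i$ do not depend on $k$, the first step is to interchange the order of summation, pulling the finite sums over $\ell$ and over $(j,t,\beta,\alpha,i)$ to the outside. This reduces the whole computation to analyzing, for each such index choice, the single inner generating function
\[
G(z):=\sum_{k\geq0}z^k\sum_{r=0}^{\lfloor(k+p)/2\rfloor}N_{\mathcal L}(k+p-2r,\ell)\binom{r-c+n-2}{n-2},
\]
where $c=p+i-j-t-\alpha$ is the shift appearing inside the binomial of Lemma~\ref{thm1:multip(k,p)}.

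The conceptual core is the identity $(1-z^2)^{-(n-1)}=\sum_{r\geq0}\binom{r+n-2}{n-2}z^{2r}$. Multiplying it by $\vartheta_{\mathcal L}^{(\ell)}(z)=\sum_h N_{\mathcal L}(h,\ell)z^h$ shows that the power series $P^{(\ell)}(z):=\vartheta_{\mathcal L}^{(\ell)}(z)/(1-z^2)^{n-1}$ has coefficients $c_h^{(\ell)}=\sum_{s=0}^{\lfloor h/2\rfloor}N_{\mathcal L}(h-2s,\ell)\binom{s+n-2}{n-2}$, so that, by \eqref{eq3:Upsilon_m^l}, the polynomial $\Upsilon_m^{(\ell)}(z)$ is precisely the truncation of $P^{(\ell)}(z)$ to degrees $\leq m$. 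I would then evaluate $G(z)$ by reindexing: the substitution $s=r-c$ (legitimate since the binomial vanishes for $r<c$) together with $M'=(k+p)-2c$ collapses the inner sum to the single coefficient $c_{M'}^{(\ell)}$, the floor in the summation bound being respected because $2c$ is even. Writing $z^k=z^{-p}z^{k+p}$ and noting $-p+2c=e$ and $p-2c=-e$ with $e:=p-2(j+t+\alpha-i)$ then gives $G(z)=z^e\bigl(P^{(\ell)}(z)-\Upsilon_{-e-1}^{(\ell)}(z)\bigr)$, where $e$ is exactly the exponent of $z$ attached to these indices in \eqref{eq3:A_pl} and $-e-1=2(j+t+\alpha-i)-p-1$ is exactly the subscript of $\Upsilon$ in \eqref{eq3:B_Gammapl}. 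Here I use the convention $\Upsilon_m^{(\ell)}=0$ for $m<0$, which automatically absorbs the case $e\geq0$ in which no truncation correction arises.

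To finish, I reassemble: summing $G(z)$ against the combinatorial coefficients and over $\ell$, the $z^eP^{(\ell)}(z)$ terms recombine by the definition \eqref{eq3:A_pl} into $\sum_{\ell}P^{(\ell)}(z)\,A_{p}^{(\ell)}(z)=\sum_{\ell}\frac{\vartheta_{\mathcal L}^{(\ell)}(z)}{(1-z^2)^{n-1}}A_{p}^{(\ell)}(z)$, while the $z^e\Upsilon_{-e-1}^{(\ell)}(z)$ terms recombine by \eqref{eq3:B_Gammapl} into $\sum_{\ell}B_{\Gamma,p}^{(\ell)}(z)$, yielding the claim. I expect the main obstacle to be the index bookkeeping in the evaluation of $G(z)$: correctly matching the shift $c$ with the exponent $e$ and with the $\Upsilon$-subscript $-e-1$, and in particular the split between $e\geq0$ and $e<0$, the latter being the sole source of the subtracted truncation polynomial that becomes $B_{\Gamma,p}^{(\ell)}$. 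Everything else is a bounded, if laborious, rearrangement of finite sums of binomial coefficients.
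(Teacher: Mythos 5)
Your proposal is correct and follows essentially the same route as the paper's proof: both interchange the finite sums, reduce to the same inner generating function $G(z)$, evaluate it via the expansion $(1-z^2)^{-(n-1)}=\sum_{r\geq0}\binom{r+n-2}{n-2}z^{2r}$ together with the reindexing $s=r-c$, and identify $\Upsilon_m^{(\ell)}(z)$ as the degree-$\leq m$ truncation of $\vartheta_{\mathcal L}^{(\ell)}(z)/(1-z^2)^{n-1}$, so that the two resulting pieces recombine into $A_p^{(\ell)}(z)$ and $B_{\Gamma,p}^{(\ell)}(z)$. The one point you leave implicit is that the shift $c=p+i-j-t-\alpha$ (the paper's $\gamma$) is nonnegative for all admissible indices (indeed $c\geq i+t\geq 0$ since $\alpha\leq\beta\leq p-j-2t$) — without this the collapse of the inner sum to a single coefficient would fail — but this is exactly the same one-line verification the paper itself relegates to ``one can check''.
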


\begin{proof}
\renewcommand{\qedsymbol}{$\blacksquare$}
Theorem~\ref{thm:eigenvalue-multiplicities} implies
$
F_\Gamma^{p-1}(z)= \sum_{k\geq0} M_\Gamma(k+1,p) z^k.
$
By \eqref{eq3:M_Gamma(k,p)} and reordering the sums, we obtain that
\begin{align}\label{eq3:F_Gamma^p-1-inicial}
F_\Gamma^{p-1}(z)
&= \sum_{\ell=0}^n
\sum_{j=1}^{p} (-1)^{j-1}  \sum_{t=0}^{\lfloor\frac{p-j}{2}\rfloor} \binom{n-p+j+2t}{t}  \\
&\quad
\sum_{\beta=0}^{p-j-2t} 2^{p-j-2t-\beta} \binom{n-\ell}{\beta} \binom{\ell}{p-j-2t-\beta}
\sum_{\alpha=0}^\beta \binom{\beta}{\alpha} \sum_{i=0}^{j-1}
G(z),\notag
\end{align}
where
$$
G(z) =  \sum_{k\geq0} z^k  \sum_{r=0}^{\lfloor\frac{k+p}{2}\rfloor} N_{\mathcal L}(k+p-2r,\ell) \binom{r-i-p+j+\alpha+t+n-2}{n-2}.
$$
Write $\gamma= i+p-j-\alpha-t$.
One can check that $\gamma\geq0$ for all allowed choices for $i$, $j$, $\alpha$ and $t$. 
We have that
\begin{align*}
G(z) 
&= \sum_{k\geq0} z^k  \sum_{r=\gamma }^{\lfloor\frac{k+p}{2}\rfloor} N_{\mathcal L}(k+p-2r,\ell) \binom{r-\gamma+n-2}{n-2} \\
&= \sum_{k\geq0} z^k  \sum_{s=0}^{\lfloor\frac{k+p-2\gamma}{2}\rfloor} N_{\mathcal L}(k+p-2\gamma-2s,\ell) \binom{s+n-2}{n-2} \\
&= z^{2\gamma-p} \sum_{h\geq p-2\gamma} z^{h}  \sum_{s=0}^{\lfloor\frac{h}{2}\rfloor} N_{\mathcal L}(h-2s,\ell) \binom{s+n-2}{n-2}.
\end{align*}
Here we made the changes of variables, $r=s+\gamma$ and $h=k+p-2\gamma$.
Hence
\begin{align*}
G(z)
&= z^{2\gamma-p} \left(\sum_{h\geq 0} z^h  \sum_{s=0}^{\lfloor\frac{h}{2}\rfloor} N_{\mathcal L}(h-2s,\ell) \binom{s+n-2}{n-2} - \Upsilon_{p-2\gamma-1}^{(\ell)}(z) \right) \\
&= z^{2\gamma-p}  \left(\frac{\vartheta_{\mathcal L}^{(\ell)}(z)}{(1-z^2)^{n-1}}
-\Upsilon_{p-2\gamma-1}^{(\ell)}(z) \right).
\end{align*}
This formula and \eqref{eq3:F_Gamma^p-1-inicial} imply the claim.
\end{proof}

By Claim~\ref{claim:F_Gamma^p-1}, it remains to show that $\sum_{\ell=0}^n B_{\Gamma,p}^{(\ell)}(z) = (-1)^{p+1}z^{-p}$.
Set
\begin{align}\label{eq3:C_pg^l}
C_{p,g}^{(\ell)} &=
\sum_{j=1}^{p} (-1)^{j-1}  \sum_{t=0}^{\lfloor\frac{p-j}{2}\rfloor} \binom{n-p+j+2t}{t}
\sum_{\beta=0}^{p-j-2t} 2^{p-j-2t-\beta} \binom{n-\ell}{\beta}
\binom{\ell}{p-j-2t-\beta}
\\ &\quad
\sum_{i=0}^{j-1} \binom{\beta}{p+i-j-t-g} . \notag
\end{align}

\begin{claim}
$\displaystyle B_{\Gamma,p}^{(\ell)}(z) = \sum_{g=0}^{p-1}  z^{2g-p} \;\Upsilon_{p-1-2g}^{(\ell)}(z) \;C_{p,g}^{(\ell)}$.
\end{claim}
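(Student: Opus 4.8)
The plan is to prove this identity by a single change of summation index that regroups the terms of $B_{\Gamma,p}^{(\ell)}(z)$ according to the power of $z$ they carry. In the defining expression \eqref{eq3:B_Gammapl}, every summand is multiplied by the factor $z^{p-2(j+t+\alpha-i)}\,\Upsilon_{2(j+t+\alpha-i)-p-1}^{(\ell)}(z)$, so both the exponent and the subscript of $\Upsilon$ depend on $j,t,\alpha,i$ only through the combination $j+t+\alpha-i$. This suggests introducing the new index
\[
g := p-(j+t+\alpha-i),
\]
for which $z^{p-2(j+t+\alpha-i)}=z^{2g-p}$ and $\Upsilon_{2(j+t+\alpha-i)-p-1}^{(\ell)}(z)=\Upsilon_{p-1-2g}^{(\ell)}(z)$, matching exactly the factors appearing in the claimed formula.

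First I would, for fixed $j,t,\beta,i$, replace the inner summation over $\alpha$ by a summation over $g$. Since $g$ is an affine bijection of $\alpha$ with $\alpha=p+i-j-t-g$, this turns $\binom{\beta}{\alpha}$ into $\binom{\beta}{p+i-j-t-g}$. Then I would interchange the order of summation so as to pull the sum over $g$ to the outside: because the factor $z^{2g-p}\,\Upsilon_{p-1-2g}^{(\ell)}(z)$ depends only on $g$ (and on $z,\ell$), it factors out of the remaining sums over $j,t,\beta,i$, and what is left is precisely the coefficient $C_{p,g}^{(\ell)}$ of \eqref{eq3:C_pg^l}. It then remains to check that $g$ can be taken to run over $0\le g\le p-1$, exactly as in the statement. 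The maximal value $g=p-1$ is attained at $t=\alpha=0$, $i=j-1$; and for any summand contributing a nonzero binomial one has $\alpha\le\beta\le p-j-2t$, whence $g=p+i-j-t-\alpha\ge i+t\ge 0$, so no nonzero term falls below $g=0$.

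The step I expect to be the most delicate is confirming that extending the range of $g$ to the full interval $[0,p-1]$ introduces no spurious terms and loses none of the original ones, so that the passage from the $(j,t,\beta,\alpha,i)$-indexed sum to the $(g;j,t,\beta,i)$-indexed sum is an exact identity. This reduces to the observation that, for a value of $g$ outside the range dictated by a given $(j,t,\beta,i)$, the corresponding $\alpha=p+i-j-t-g$ is either negative or exceeds $\beta$, so that $\binom{\beta}{p+i-j-t-g}$ vanishes by the convention $\binom{b}{a}=0$ for $a<0$ or $a>b$ adopted before Lemma~\ref{thm1:multip(k,p)}. Beyond this boundary check the argument is purely routine bookkeeping.
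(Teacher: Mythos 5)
Your proposal is correct and follows essentially the same route as the paper: the same change of variables $g=p+i-j-t-\alpha$, the same use of the convention $\binom{\beta}{\alpha}=0$ outside $0\le\alpha\le\beta$ to justify extending the summation range, and the same reordering of sums to factor out $z^{2g-p}\,\Upsilon_{p-1-2g}^{(\ell)}(z)$ and identify the remaining sum with $C_{p,g}^{(\ell)}$. Your boundary check that $0\le g\le p-1$ on all nonvanishing terms matches the paper's (implicit) verification, so nothing is missing.
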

\begin{proof}
\renewcommand{\qedsymbol}{$\blacksquare$}
We note that the sum over $\alpha$ in \eqref{eq3:B_Gammapl} can be extended to $0\leq \alpha  \leq p$ since $\binom{\beta}{\alpha}$ will vanish for $\alpha>\beta$.
We make the change of variables $g=p+i-j-t-\alpha$, thus $0\leq g\leq p-1$ and $\alpha=p+i-j-t-g $.
Hence
\begin{align*}
B_{\Gamma,p}^{(\ell)}(z)
&= \sum_{j=1}^{p} (-1)^{j-1}  \sum_{t=0}^{\lfloor\frac{p-j}{2}\rfloor} \binom{n-p+j+2t}{t}
\sum_{\beta=0}^{p-j-2t} 2^{p-j-2t-\beta} \binom{n-\ell}{\beta}
\\ &\quad
\binom{\ell}{p-j-2t-\beta}
\sum_{g=0}^{p-1} \sum_{i=0}^{j-1} \binom{\beta}{p+i-j-t-g}  z^{2g-p} \;\Upsilon_{p-1-2g}^{(\ell)}(z).
\end{align*}
The claim follows by reordering the sums.
\end{proof}

By definition, $\Upsilon_{m}^{(\ell)}(z)=0$ for $m<0$.
Moreover, $\Upsilon_{m}^{(\ell)}(z) =0$ if $n-\ell>m$, since $N_{\mathcal L}(k,\ell)=0$ for all $0\leq k<n-\ell$.
Hence,
\begin{equation}\label{eq3:sumB_pGamma^l}
\sum_{\ell=0}^n B_{\Gamma,p}^{(\ell)}(z)  = \sum_{g=0}^{\lfloor\frac{p-1}{2}\rfloor}  z^{2g-p} \sum_{\ell=n+1+2g-p}^{n}\Upsilon_{p-1-2g}^{(\ell)}(z) \;C_{p,g}^{(\ell)}.
\end{equation}

\begin{claim}\label{claim:C_pg^l}
For any $0\leq g\leq \tfrac{p-1}{2}$, $C_{p,g}^{(\ell)}=0$ if $n+1+2g-p\leq \ell<n$, and $C_{p,g}^{(n)}= (-1)^{p-1+g}\binom{n-1}{g}$.
\end{claim}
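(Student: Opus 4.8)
The plan is to treat the two assertions separately, after first massaging the quadruple sum in \eqref{eq3:C_pg^l} into a single coefficient extraction. The two workhorses are the ``subset of a subset'' identity $\binom{N}{\beta}\binom{\beta}{k}=\binom{N}{k}\binom{N-k}{\beta-k}$ and the binomial convolution $\sum_{a+b=m}2^{a}\binom{\ell}{a}\binom{N}{b}=[x^{m}](1+2x)^{\ell}(1+x)^{N}$. Applying the first with $N=n-\ell$ and $k=p+i-j-t-g$ pulls $\binom{n-\ell}{k}$ out of the $\beta$-sum, and the leftover $\beta$-sum then collapses by the second identity. Writing $\nu=n-\ell$ and $d=g-i-t$ (so that $k=p-j-2t-d$), this turns \eqref{eq3:C_pg^l} into
\[
C_{p,g}^{(\ell)}=\sum_{j=1}^{p}(-1)^{j-1}\sum_{t}\binom{n-p+j+2t}{t}\sum_{i=0}^{j-1}\binom{\nu}{k}\,[x^{d}]\,(1+2x)^{\ell}(1+x)^{\nu-k},
\]
and all subsequent work is the analysis of this expression.

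For the value at $\ell=n$ I would set $\nu=0$. Then $\binom{0}{k}=0$ unless $k=0$, which forces $i=j+t+g-p$, and $[x^{d}](1+2x)^{n}$ with $d=g-i-t=p-j-2t$ becomes $2^{p-j-2t}\binom{n}{p-j-2t}$. Writing $m=p-j-2t$ to eliminate $j$, the constraints $0\le i\le j-1$ reduce to $t\le g-m$, leaving
\[
C_{p,g}^{(n)}=(-1)^{p-1}\sum_{m\ge0}(-2)^{m}\binom{n}{m}\sum_{t=0}^{g-m}\binom{n-m}{t}.
\]
Reversing subset-of-a-subset as $\binom{n}{m}\binom{n-m}{t}=\binom{n}{m+t}\binom{m+t}{m}$ and summing over $m$ with $u=m+t$ fixed produces $\binom{n}{u}(1-2)^{u}$, so the double sum telescopes to $(-1)^{p-1}\sum_{u=0}^{g}(-1)^{u}\binom{n}{u}$; the classical partial-sum identity $\sum_{u=0}^{g}(-1)^{u}\binom{n}{u}=(-1)^{g}\binom{n-1}{g}$ then yields exactly $C_{p,g}^{(n)}=(-1)^{p-1+g}\binom{n-1}{g}$.

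For the vanishing I would exploit that $\binom{\nu}{k}=0$ unless $0\le k\le\nu$, so that when $1\le\nu\le p-1-2g$ the condition $k\le\nu$ together with $d=g-i-t\ge0$ confines the indices to a thin window whose width is governed by $\nu$. The key structural fact is that, for fixed $j,t$ and $m=p-j-2t$, the $i$-sum is a truncation of the \emph{untruncated} sum $\sum_{d\ge0}\binom{\nu}{m-d}[x^{d}](1+2x)^{\ell}(1+x)^{\nu-m+d}$, and a short residue (generating-function) computation shows this untruncated sum equals $[w^{m}]\,\frac{(1+w)^{n}}{(1-w)^{n-m+1}}$, which is \emph{independent of $\ell$}. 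Hence all of the $\ell$-dependence of $C_{p,g}^{(\ell)}$ lives in the head and tail corrections of the truncation, and the remaining task is to show that, after weighting by $(-1)^{j-1}\binom{n-p+j+2t}{t}$ and summing over $j$ and $t$, these corrections cancel whenever $\nu\le p-1-2g$.

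I expect this last cancellation to be the main obstacle. The ranges of $i$, $j$ and $t$ are strongly coupled: the cutoff $i\le j-1$ interacts with both $k\ge0$ and $d\ge0$, so the vanishing is not a single telescoping but must be extracted from an alternating sum of triple products of binomials, of the shape $\sum_{j}(-1)^{j-1}\binom{n-p+j+2t}{t}\binom{\nu}{P-j}\binom{j}{d-c}$ with $P=p-2t-d$. I would evaluate this $j$-sum by a Vandermonde-type identity and show that it vanishes precisely in the regime $\nu\le p-1-2g$; carefully tracking the surviving window and verifying that the head and tail contributions truly annihilate one another is the delicate bookkeeping. Once both ingredients are in place, the two displayed evaluations of $C_{p,g}^{(\ell)}$ follow at once.
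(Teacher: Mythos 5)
Your reduction of \eqref{eq3:C_pg^l} to the coefficient-extraction form is correct, and your evaluation at $\ell=n$ is complete and correct: forcing $k=0$, passing to $m=p-j-2t$, collapsing the double sum via $\binom{n}{m}\binom{n-m}{t}=\binom{n}{m+t}\binom{m+t}{m}$ and $\sum_{m}\binom{u}{m}(-2)^{m}=(-1)^{u}$, and finishing with $\sum_{u=0}^{g}(-1)^{u}\binom{n}{u}=(-1)^{g}\binom{n-1}{g}$. This is in fact more direct than the paper's treatment of that case, which proves $C_{p,g}^{(n)}=(-1)^{p-1+g}\binom{n-1}{g}$ by induction on $g$ and arrives at the same alternating partial sum. (A minor simplification: your ``untruncated'' sum is just $[x^{m}](1+2x)^{\ell+\nu}=2^{m}\binom{n}{m}$, which makes its independence of $\ell$ transparent without any residue computation.)

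However, the vanishing statement --- $C_{p,g}^{(\ell)}=0$ for $n+1+2g-p\leq \ell<n$ --- is not proved in your proposal; it is only a plan. You reduce it to showing that the head and tail corrections of the truncated $i$-sum cancel after the alternating sum over $j$ and $t$, and you then write that you ``would evaluate this $j$-sum by a Vandermonde-type identity and show that it vanishes precisely in the regime $\nu\leq p-1-2g$''; but that evaluation is exactly the hard content of the claim, and it is never carried out. Nor is it evident that it goes through as described: the correction terms do depend on $\ell$ through $(1+2x)^{\ell}(1+x)^{\nu-k}$, the constraints $i\geq0$, $i\leq j-1$, $0\leq k\leq\nu$ interact nontrivially, and your sketch never shows where the hypothesis $g\leq\tfrac{p-1}{2}$ enters. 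For comparison, the paper devotes the bulk of its proof to precisely this half: it inducts on $g$, with the base case $g=0$ reducing (after forcing $i=t=0$) to $\sum_{j=1}^{p}(-1)^{j-1}\binom{n-\ell}{p-j}=\binom{n-\ell-1}{p-1}=0$, and with an inductive step that rewrites the inner binomial by a Pascal-type identity, kills the extra term by a support argument, and then applies Vandermonde's convolution $\sum_{t}\binom{a}{t}\binom{b}{m-t}=\binom{a+b}{m}$ together with \eqref{eq3:combinatorial_identity} to land on a multiple of $\binom{n-\ell-1}{p-2g-3}=0$. Until you supply an argument of comparable substance for the cancellation you postulate, the claim is only half proven.
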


\begin{proof}
\renewcommand{\qedsymbol}{$\blacksquare$}
We first consider the case $\ell=n$.
One can easily see from \eqref{eq3:C_pg^l} that the sum over $\beta$ vanishes for $\beta>0$ since $n-\ell=0$.
Thus
\begin{align*}
C_{p,g}^{(n)}
&= \sum_{j=1}^{p} (-1)^{j-1}  \sum_{t=0}^{\lfloor\frac{p-j}{2}\rfloor} \binom{n-p+j+2t}{t}
2^{p-j-2t} \binom{n}{p-j-2t}
\sum_{i=0}^{j-1} \binom{0}{p+i-j-t-g} \\
&= \sum_{j=1}^{p} (-1)^{j-1}  \sum_{t=p-g-j}^{\lfloor\frac{p-j}{2}\rfloor} \binom{n-p+j+2t}{t}
2^{p-j-2t} \binom{n}{p-j-2t}
\end{align*}
since $\sum_{i=0}^{j-1} \binom{0}{p+i-j-t-g}$ is equal to $1$ if $p-g-j-t\leq 0\leq p-1-g-t$ and $0$ otherwise.
Note that the sum over $j$ in the last row is restricted to $p-g-j\leq \lfloor(p-j)/{2}\rfloor$, which is equivalent to $j\geq p-2g$.
By making the change of variables $k=p-j$, we obtain that
\begin{align}\label{eq3:C_pg^n}
C_{p,g}^{(n)}
&= \sum_{k=0}^{2g} (-1)^{p-k-1}  \sum_{t=k-g}^{\lfloor k/2\rfloor} \binom{n-k+2t}{t}
2^{k-2t} \binom{n}{k-2t}.
\end{align}

We now prove the assertion for $\ell=n$ by induction on $g$.
One can easily check that $C_{p,0}^{(n)}=(-1)^{p-1}$ from \eqref{eq3:C_pg^n}.
In the inductive step, we have that
\begin{multline*}
(-1)^{p-1}\, C_{p,g+1}^{(n)} =\sum_{k=0}^{2(g+1)} (-1)^{k}  \sum_{t=k-(g+1)}^{\lfloor\frac{k}{2}\rfloor} \binom{n-k+2t}{t}
2^{k-2t} \binom{n}{k-2t}
= (-1)^{p-1}\, C_{p,g}^{(n)} \\
-2n\binom{n-1}{g}
 +\binom{n}{g+1}
+ \sum_{k=0}^{2g} (-1)^{k}  \binom{n-2g-2+k}{k-g-1} 2^{2g+2-k} \binom{n}{2g+2-k}.
\end{multline*}
By making the change of variable $m=2g+2-k$ and the inductive hypothesis, we have that
\begin{align*}
C_{p,g+1}^{(n)}
&= C_{p,g}^{(n)}
+ (-1)^{p-1}\sum_{m=0}^{g+1} (-2)^{m}  \binom{n}{m} \binom{n-m}{g+1-m}
\\ &=
(-1)^{p-1}\sum_{u=0}^{g} (-1)^{u}\binom{n}{u}
+ (-1)^{p-1}\sum_{m=0}^{g+1} (-2)^{m}   \binom{g+1}{m} \binom{n}{g+1}
\\ &=
(-1)^{p-1}\sum_{u=0}^{g} (-1)^{u}\binom{n}{u}
+ (-1)^{p-1}(-1)^{g+1} \binom{n}{g+1}
=(-1)^{p-1} \sum_{u=0}^{g+1} (-1)^{u}\binom{n}{u}.
\end{align*}
The assertion follows by the well known identity
\begin{equation}\label{eq3:combinatorial_identity}
\sum_{u=0}^m(-1)^u \binom{a}{u}=(-1)^m\binom{a-1}{m}, \qquad a>0.
\end{equation}

We now consider the other cases.
We again use induction on $g$.
When $g=0$, we assume $n+1-p\leq \ell<n$ and  we have that
\begin{align*}
C_{p,0}^{(\ell)} &=
\sum_{j=1}^{p} (-1)^{j-1}  \sum_{t=0}^{\lfloor\frac{p-j}{2}\rfloor} \binom{n-p+j+2t}{t}
\sum_{\beta=0}^{p-j-2t} 2^{p-j-2t-\beta} \binom{n-\ell}{\beta}
\binom{\ell}{p-j-2t-\beta}
\\ &\quad
\sum_{i=0}^{j-1} \binom{\beta}{p+i-j-t}.
\end{align*}
Clearly, $\binom{\beta}{p+i-j-t}\neq0$ if and only if $0\leq p+i-j-t\leq \beta$, thus $i+t\leq 0$ since $\beta\leq p-j-2t$.
Thus, $i=t=0$ and
\begin{align*}
C_{p,0}^{(\ell)} &=
\sum_{j=1}^{p} (-1)^{j-1}
\sum_{\beta=0}^{p-j} 2^{p-j-\beta} \binom{n-\ell}{\beta}
\binom{\ell}{p-j-\beta}
\binom{\beta}{p-j} \\
&=
\sum_{j=1}^{p} (-1)^{j-1}
\binom{n-\ell}{p-j}
=(-1)^{p-1}\sum_{j=0}^{p-1} (-1)^{j} \binom{n-\ell}{j}
=\binom{n-\ell-1}{p-1} =0
\end{align*}
since $n-\ell-1<p-1$ by assumption.
The third equality in the last row follows by \eqref{eq3:combinatorial_identity}.

We now assume that $C_{p,g}^{(\ell)}=0$, $g+1\leq \tfrac{p-1}{2}$ and $n+1+2(g+1)-p\leq \ell$ ($\iff n-\ell\leq p-2g-3$).
In order to use induction, we first note that
\begin{equation*}
\sum_{i=0}^{j-1} \binom{\beta}{p+i-j-t-g-1}
= \binom{\beta}{p-j-t-g-1} +\sum_{i=0}^{j-1} \binom{\beta}{p+i-j-t-g} -\binom{\beta}{p-1-t-g},
\end{equation*}
which is the last part in the expression of $C_{p,g+1}^{(\ell)}$ in \eqref{eq3:C_pg^l}.
Hence,
\begin{align*}
C_{p,g+1}^{(\ell)}
&=\sum_{j=1}^{p} (-1)^{j-1}  \sum_{t=0}^{\lfloor\frac{p-j}{2}\rfloor} \binom{n-p+j+2t}{t}
\sum_{\beta=0}^{p-j-2t} 2^{p-j-2t-\beta} \binom{n-\ell}{\beta}
\binom{\ell}{p-j-2t-\beta}
\\ &\quad
\binom{\beta}{p-j-t-g-1}
+C_{p,g}^{(\ell)}
-\sum_{j=1}^{p} (-1)^{j-1}  \sum_{t=0}^{\lfloor\frac{p-j}{2}\rfloor} \binom{n-p+j+2t}{t}
\\ &\quad
\sum_{\beta=0}^{p-j-2t} 2^{p-j-2t-\beta} \binom{n-\ell}{\beta} \binom{\ell}{p-j-2t-\beta}
\binom{\beta}{p-1-t-g}.
\end{align*}
We have that $C_{p,g}^{(\ell)}=0$ by induction.
Moreover, the last term vanishes since $\binom{n-\ell}{\beta}\binom{\beta}{p-1-t-g}=0$ in the corresponding cases.
Indeed, if this number were nonzero, then $p-1-t-g\leq \beta\leq n-\ell\leq p-2g-3$, thus $g+2\leq t$, which is a contradiction since $p-1-t-g\leq \beta\leq p-j-2t$ yields $t\leq g+1-j\leq g$.

By making the change of variables $\gamma=p-j-2t-\beta$ to the remaining term, we obtain that
\begin{align*}
C_{p,g+1}^{(\ell)} &=
\sum_{j=1}^{p} (-1)^{j-1}  \sum_{t=0}^{\lfloor\frac{p-j}{2}\rfloor} \binom{n-p+j+2t}{t}
\sum_{\gamma=0}^{p-j-2t} 2^{\gamma} \binom{\ell}{\gamma}\binom{n-\ell}{p-j-2t-\gamma}
\binom{p-j-2t-\gamma}{p-j-t-g-1}.
\end{align*}
Clearly, $\binom{p-j-2t-\gamma}{p-j-t-g-1}\neq0$ if and only if $0\leq p-j-t-g-1\leq p-j-2t-\gamma$, thus $t+\gamma\leq g+1$.
This implies that the sum over $t$ goes from $0$ to $\min(g+1,\lfloor\frac{p-j}{2}\rfloor)$.
Since $g+1\leq \lfloor\frac{p-j}{2}\rfloor$ if and only if $j\leq p-2g-2$, we have that
\begin{align*}
C_{p,g+1}^{(\ell)}
&= \sum_{j=1}^{p-2g-2} (-1)^{j-1}  \sum_{t=0}^{g+1} \binom{n-p+j+2t}{t}
\sum_{\gamma=0}^{g+1-t} 2^{\gamma} \binom{\ell}{\gamma}\binom{n-\ell}{p-j-2t-\gamma}
\binom{p-j-2t-\gamma}{g+1-t-\gamma} \\
&
+ \sum_{j=p-2g-1}^{p} (-1)^{j-1}  \sum_{t=0}^{\lfloor\frac{p-j}{2}\rfloor} \binom{n-p+j+2t}{t}
\sum_{\gamma=0}^{g+1-t} 2^{\gamma} \binom{\ell}{\gamma}\binom{n-\ell}{p-j-2t-\gamma}
\binom{p-j-2t-\gamma}{g+1-t-\gamma} \\
&=   \sum_{t=0}^{g+1}
\sum_{\gamma=0}^{g+1-t} 2^{\gamma} \binom{\ell}{\gamma}
\sum_{j=1}^{p-2g-2} (-1)^{j-1} \binom{n-p+j+2t}{t} \binom{n-\ell}{p-j-2t-\gamma}
\binom{p-j-2t-\gamma}{g+1-t-\gamma} \\
&
+ \sum_{t=0}^{g} \sum_{\gamma=0}^{g+1-t} 2^{\gamma} \binom{\ell}{\gamma}\sum_{j=p-2g-1}^{p-2t} (-1)^{j-1}   \binom{n-p+j+2t}{t}
\binom{n-\ell}{p-j-2t-\gamma}
\binom{p-j-2t-\gamma}{g+1-t-\gamma}.
\end{align*}
The last equality follows by reordering the sums in both terms, with the sum extremes handled with particular care in the second row.

In the first term in the formula above, the sum over $j$ goes actually from $2g+3-2t-\gamma$ since $\binom{n-\ell}{p-j-2t-\gamma}\neq 0$ implies that $p-j-2t-\gamma\leq n-\ell\leq p-2g-3$.
By making the change of variables $k=p-j-2t-\gamma$ in both terms we obtain that
\begin{align*}
C_{p,g+1}^{(\ell)}
&=   \sum_{t=0}^{g+1}
\sum_{\gamma=0}^{g+1-t} 2^{\gamma} \binom{\ell}{\gamma}
\sum_{k=2g+2-2t-\gamma}^{p-2g-3} (-1)^{p-k-\gamma-1} \binom{n-k-\gamma}{t} \binom{n-\ell}{k}
\binom{k}{g+1-t-\gamma} \\
&
+ \sum_{t=0}^{g} \sum_{\gamma=0}^{g+1-t} 2^{\gamma} \binom{\ell}{\gamma}\sum_{k=-\gamma}^{2g+1-2t-\gamma}  (-1)^{p-k-\gamma-1} \binom{n-k-\gamma}{t} \binom{n-\ell}{k}
\binom{k}{g+1-t-\gamma}.
\end{align*}
Note that in the second term, the sum over $k$ goes from $0$ in order to $\binom{n-\ell}{k}\neq0$, and the sum over $t$ can be extended up to $g+1$ since it vanishes for $t=g+1$.
Hence,
\begin{align*}
C_{p,g+1}^{(\ell)}
&=   (-1)^{p-1}\sum_{t=0}^{g+1}
\sum_{\gamma=0}^{g+1-t} (-2)^{\gamma} \binom{\ell}{\gamma}
\sum_{k=0}^{p-2g-3} (-1)^{k} \binom{n-k-\gamma}{t} \binom{n-\ell}{k}
\binom{k}{g+1-t-\gamma}  \\
&=   (-1)^{p-1}
\sum_{k=0}^{p-2g-3} (-1)^{k}\binom{n-\ell}{k}
\sum_{\gamma=0}^{g+1} (-2)^{\gamma} \binom{\ell}{\gamma}
\sum_{t=0}^{g+1-\gamma} \binom{n-k-\gamma}{t} \binom{k}{g+1-t-\gamma}.
\end{align*}

The well known identity $\sum_{t=0}^m\binom{a}{t}\binom{b}{m-t}=\binom{a+b}{m}$ implies that the sum over $t$ in the formula above equals $\binom{n-k-\gamma+k}{g+1-\gamma}= \binom{n-\gamma}{g+1-\gamma}$, which does not depend on $k$.
Hence
\begin{align*}
C_{p,g+1}^{(\ell)}
&=   (-1)^{p-1}
\sum_{\gamma=0}^{g+1} (-2)^{\gamma} \binom{\ell}{\gamma}
\binom{n-\gamma}{g+1-\gamma}
\sum_{k=0}^{p-2g-3} (-1)^{k}\binom{n-\ell}{k}\\
&=   (-1)^{p-1}
\sum_{\gamma=0}^{g+1} (-2)^{\gamma} \binom{\ell}{\gamma}
\binom{n-\gamma}{g+1-\gamma}
(-1)^{p-2g-3}\binom{n-\ell-1}{p-2g-3}.
\end{align*}
The last row follows by \eqref{eq3:combinatorial_identity}.
We conclude that $C_{p,g+1}^{(\ell)}=0$ since $n-\ell-1<n-\ell\leq p-2g-3$ by assumption.
\end{proof}

\begin{claim}
$\displaystyle \sum_{\ell=0}^n B_{\Gamma,p}^{(\ell)}(z) =\frac{(-1)^{p-1}}{z^p}. $
\end{claim}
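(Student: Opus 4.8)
The plan is to exploit the two preceding claims to collapse the $\ell$-summation down to its single surviving term $\ell=n$, then to evaluate $\Upsilon^{(n)}_{\bullet}(z)$ in closed form and recognize the result as a trivial power-series identity. I would start from \eqref{eq3:sumB_pGamma^l}, which already gives
\[
\sum_{\ell=0}^n B_{\Gamma,p}^{(\ell)}(z) = \sum_{g=0}^{\lfloor\frac{p-1}{2}\rfloor} z^{2g-p}\sum_{\ell=n+1+2g-p}^{n}\Upsilon_{p-1-2g}^{(\ell)}(z)\,C_{p,g}^{(\ell)}.
\]
By Claim~\ref{claim:C_pg^l}, every coefficient $C_{p,g}^{(\ell)}$ with $n+1+2g-p\le\ell<n$ vanishes, so only $\ell=n$ contributes to the inner sum, where $C_{p,g}^{(n)}=(-1)^{p-1+g}\binom{n-1}{g}$. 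This reduces the whole expression to
\[
\sum_{\ell=0}^n B_{\Gamma,p}^{(\ell)}(z)=(-1)^{p-1}\sum_{g=0}^{\lfloor\frac{p-1}{2}\rfloor}(-1)^g\binom{n-1}{g}\,z^{2g-p}\,\Upsilon_{p-1-2g}^{(n)}(z).
\]

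Second, I would compute $\Upsilon^{(n)}_{m}(z)$ explicitly. Since $Z(\mu)=n$ forces $\mu=0$, and hence $\norma{\mu}=0$, we have $N_{\mathcal L}(k,n)=1$ for $k=0$ (using $0\in\mathcal L_\Gamma$) and $N_{\mathcal L}(k,n)=0$ otherwise. Feeding this into \eqref{eq3:Upsilon_m^l}, the inner sum over $s$ selects only the index $h=2s$, giving
\[
\Upsilon_{m}^{(n)}(z)=\sum_{j=0}^{\lfloor m/2\rfloor} \binom{j+n-2}{n-2}\,z^{2j},
\]
i.e.\ the truncation of $(1-z^2)^{-(n-1)}$ to degree $m$; for $m=p-1-2g$ the upper limit is $\lfloor\tfrac{p-1}{2}\rfloor-g$.

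Third, substituting this and pulling out the factor $(-1)^{p-1}z^{-p}$, the claim becomes the assertion that the double sum
\[
\sum_{g=0}^{\lfloor\frac{p-1}{2}\rfloor}(-1)^g\binom{n-1}{g}\sum_{j=0}^{\lfloor\frac{p-1}{2}\rfloor-g}\binom{j+n-2}{n-2}\,z^{2(g+j)}
\]
equals $1$. Collecting the coefficient of $z^{2u}$, with $u=g+j$ running from $0$ to $\lfloor\tfrac{p-1}{2}\rfloor$, yields $\sum_{g=0}^{u}(-1)^g\binom{n-1}{g}\binom{u-g+n-2}{n-2}$, which is precisely the coefficient of $w^u$ in the product $(1-w)^{n-1}\cdot(1-w)^{-(n-1)}=1$; hence it equals $\delta_{u,0}$. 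The double sum therefore reduces to its $u=0$ term, which is $1$, and the claim follows.

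The only delicate point is the bookkeeping of summation ranges: one must verify that the truncation in $\Upsilon^{(n)}_{p-1-2g}$ is harmless, that is, that after the substitution $u=g+j$ the inner sum over $g$ genuinely runs over the full Cauchy-product range $0\le g\le u$ so that the vanishing identity for $u\ge1$ applies verbatim. This holds because the constraints $g\le u$ and $u\le\lfloor\tfrac{p-1}{2}\rfloor$ are automatic, so no term of the product is lost; the truncation merely discards contributions with $u>\lfloor\tfrac{p-1}{2}\rfloor$, all of which vanish anyway.
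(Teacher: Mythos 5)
Your proposal is correct and follows essentially the same route as the paper's proof: collapse the sum to the single term $\ell=n$ via Claim~\ref{claim:C_pg^l}, evaluate $\Upsilon^{(n)}_{p-1-2g}(z)$ as the truncation of $(1-z^2)^{-(n-1)}$ using $N_{\mathcal L}(k,n)=\delta_{k,0}$, and finish with the convolution identity $\sum_{g=0}^{u}(-1)^g\binom{n-1}{g}\binom{u-g+n-2}{n-2}=\delta_{u,0}$ coming from $(1-w)^{n-1}(1-w)^{-(n-1)}=1$. The paper justifies this last identity by writing the truncated binomial sum as $(1-z^2)^{n-1}$ minus its tail, while you invoke the Cauchy product directly (noting $\binom{n-1}{g}=0$ for $g>n-1$), but this is a cosmetic difference rather than a different method.
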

\begin{proof}
\renewcommand{\qedsymbol}{$\blacksquare$}
Equation \eqref{eq3:sumB_pGamma^l} and Claim~\ref{claim:C_pg^l} imply that
\begin{align*}
\sum_{\ell=0}^n B_{\Gamma,p}^{(\ell)}(z)
&= \sum_{g=0}^{\lfloor\frac{p-1}{2}\rfloor} z^{2g-p} \; \Upsilon_{p-1-2g}^{(n)}(z) \;C_{p,g}^{(n)}.
\end{align*}
By Claim~\ref{claim:C_pg^l}, since $N_{\mathcal L}(k,n)=1$ if $k=0$ and $0$ otherwise, we have that
\begin{align*}
\sum_{\ell=0}^n B_{\Gamma,p}^{(\ell)}(z)
&= \frac{(-1)^{p-1}}{z^p}\sum_{g=0}^{\lfloor\frac{p-1}{2}\rfloor} z^{2g} (-1)^{g}\binom{n-1}{g}\; \sum_{h=0}^{\lfloor\frac{p-1}{2}\rfloor-g}  z^{2h}\binom{h+n-2}{n-2}\\
&= \frac{(-1)^{p-1}}{z^p} \sum_{m=0}^{\lfloor\frac{p-1}{2}\rfloor} z^{2m}
\sum_{g=0}^{m} (-1)^{g}\binom{n-1}{g} \binom{m-g+n-2}{n-2}
.
\end{align*}
We note that $\sum_{g=0}^{m} (-1)^{g}\binom{n-1}{g} \binom{m-g+n-2}{n-2}$ is the $m$-th term of the series
\begin{multline*}
\left(\sum_{h\geq0}\binom{h+n-2}{n-2}z^{2h}\right)
\left(\sum_{g=0}^{\lfloor\frac{p-1}{2}\rfloor} (-z^2)^{g}\binom{n-1}{g} \right) \\
=\frac{1}{(1-z^2)^{n-1}} \left( (1-z^2)^{n-1}-\sum_{g=\lfloor\frac{p-1}{2}\rfloor+1}^{n-1} (-z^2)^{g}\binom{n-1}{g}\right) \\
=1-\frac{1}{(1-z^2)^{n-1}} \sum_{g=\lfloor\frac{p-1}{2}\rfloor+1}^{n-1} (-z^2)^{g}\binom{n-1}{g},
\end{multline*}
which is $1$ for $m=0$ and $0$ for $1\leq m\leq \lfloor\frac{p-1}{2}\rfloor$.
Then $\sum_{\ell=0}^n B_{\Gamma,p}^{(\ell)}(z)=(-1)^{p-1}/z^p$ as claimed.
\end{proof}
This completes the proof of Theorem~\ref{thm:F_Gamma^p}.
\end{proof}

Before proving Corollary~\ref{cor:charact[0,p]}, we make an observation.

\begin{remark}\label{rem3:extreme-characterizations}
Corollary~\ref{cor:charact[0,p]} contains the characterizations in \cite[Thm.~3.6]{LMR-onenorm} in the extreme cases $p_0=0$ and $p_0=n-1$.
Indeed, if $p_0=0$, then \eqref{eq1:condition[0,p_0]-isosp} tells us that $ \vartheta_{\mathcal L_{\Gamma}}(z)= \sum_{\ell=0}^n \vartheta_{\mathcal L_{\Gamma}}^{(\ell)}(z) = \sum_{\ell=0}^n \vartheta_{\mathcal L_{\Gamma'}}^{(\ell)}(z) = \vartheta_{\mathcal L_{\Gamma'}}(z)$, or equivalently that, $\mathcal L_\Gamma$ and $\mathcal L_{\Gamma'}$ are $\norma{\cdot}$-isospectral.

When $p_0=n-1$, the condition \eqref{eq1:condition[0,p_0]-isosp} consists of $n$ equations in the $n$ variables $\vartheta_{\mathcal L_{\Gamma}}^{(\ell)}(z) -\vartheta_{\mathcal L_{\Gamma'}}^{(\ell)}(z)$, $0\leq \ell\leq n-1$ (clearly $\vartheta_{\mathcal L_{\Gamma}}^{(n)}(z) =\vartheta_{\mathcal L_{\Gamma'}}^{(n)}(z) =1$).
The matrix associated to this linear equation is the Vandermonde matrix, which is non-singular.
Hence, \eqref{eq1:condition[0,p_0]-isosp} is equivalent to have $\vartheta_{\mathcal L_{\Gamma}}^{(\ell)}(z) =\vartheta_{\mathcal L_{\Gamma'}}^{(\ell)}(z)$ for all $0\leq \ell\leq n$, that is, $\mathcal L_\Gamma$ and $\mathcal L_{\Gamma'}$ are $\norma{\cdot}^*$-isospectral.
\end{remark}

\begin{proof}[Proof of Corollary~\ref{cor:charact[0,p]}]
Write $\mathcal L=\mathcal L_\Gamma$ and $\mathcal L'=\mathcal L_{\Gamma'}$.
By Theorem~\ref{thm:F_Gamma^p}, we have that
\begin{equation}\label{eq3:F_Gamma^p-F_Gamma'^p}
F_{\Gamma}^{p}(z) - F_{\Gamma'}^{p}(z) = \frac{1}{(1-z^2)^{n-1}}\sum_{\ell=0}^{n} \left(\vartheta_{\mathcal L}^{(\ell)}(z) - \vartheta_{\mathcal L'}^{(\ell)}(z)\right)A_{p+1,\ell}(z)
\end{equation}
for all $0\leq p\leq n-1$.

On the one hand, we know that $\Gamma\ba S^{2n-1}$ and $\Gamma'\ba S^{2n-1}$ are $p$-isospectral for all $0\leq p\leq p_0$ if and only if $F_{\Gamma}^{p}(z) = F_{\Gamma'}^{p}(z)$ for every $0\leq p\leq p_0$ by \eqref{eq1:p-isosp}.
On the other hand, \eqref{eq1:condition[0,p_0]-isosp} is equivalent to
$
\sum_{\ell=0}^{n} \left(\vartheta_{\mathcal L}^{(\ell)}(z) - \vartheta_{\mathcal L'}^{(\ell)}(z)\right)A_{p+1,\ell}(z)=0,
$
since $A_{p+1,\ell}(z)$ is a polynomial on $\ell$ of degree $\leq p_0+1$ (see Remark~\ref{rem3:degreesA}) for every $0\leq p\leq p_0$.
These both facts complete the proof.
\end{proof}

\begin{proof}[Proof of Theorem~\ref{thm:theta^l-rational}]
In \cite[Thm.~4.2]{LMR-onenorm}, the following formula was proved, which actually holds for any sublattice $\mathcal L$ of $P(G)\simeq \Z^n$ satisfying that $\mu\in\mathcal L$ if and only if $\mu+q\nu\in\mathcal L$ for any $\nu\in P(G)$: for $a\geq0$ and $0\leq r<q$,
\begin{equation}\label{eq3:N-N^red}
N_{\mathcal L}(aq+r,\ell )= \sum_{s=0}^{n-\ell } 2^s\binom{\ell +s}{s} \sum_{t=s}^{a}
\binom{t-s+n-\ell -1}{n-\ell -1} \,
N_{\mathcal L}^{\mathrm{red}}((a-t)q+r,\ell +s).
\end{equation}

One can check that $\mathcal L_\Gamma$ with $\Gamma\subset T$ finite and $q(\Gamma)=q$, satisfies that $\mu\in\mathcal L_\Gamma$ if and only if $\mu+q\nu\in\mathcal L_\Gamma$ for any $\nu\in P(G)$.
Write $\mathcal L=\mathcal L_\Gamma$.
Applying \eqref{eq3:N-N^red} to \eqref{eq1:theta_L^l}, we obtain that
\begin{align*}
\vartheta_{\mathcal L}^{(\ell)}(z)
&= \sum_{r=0}^{q-1} \sum_{a\geq0} N_{\mathcal L_\Gamma}(aq+r,\ell) z^{aq+r} \\
&=\sum_{s=0}^{n-\ell } 2^s\binom{\ell +s}{s}
\sum_{r=0}^{q-1} \sum_{a\geq0}  z^{aq+r} \sum_{t=s}^{a}
\binom{t-s+n-\ell -1}{n-\ell -1} \,
N_{\mathcal L}^{\mathrm{red}}((a-t)q+r,\ell +s) \\
&=\sum_{s=0}^{n-\ell } 2^s\binom{\ell +s}{s} z^{sq}
\sum_{r=0}^{q-1} z^{r} \sum_{b\geq0}  z^{bq} \sum_{u=0}^{b}
\binom{u+n-\ell -1}{n-\ell -1} \,
N_{\mathcal L}^{\mathrm{red}}((b-u)q+r,\ell +s).
\end{align*}
In the last row we made the change of variables $t=u+s$ and $a=b+s$.

Since $\sum_{k\geq0} \binom{k+n-\ell-1}{n-\ell-1} z^{kq}=(1-z^q)^{-(n-\ell)}$, we get
\begin{align*}
\vartheta_{\mathcal L}^{(\ell)}(z)
&= \sum_{s=0}^{n-\ell } 2^s\binom{\ell +s}{s} z^{sq} \sum_{r=0}^{q-1} z^r
\left( \frac{1}{(1-z^q)^{n-\ell}} \sum_{h\geq0} N_{\mathcal L}^{\mathrm{red}}(hq+r,\ell +s) z^{hq}\right) \\
&= \frac{1}{(1-z^q)^{n-\ell}} \sum_{s=0}^{n-\ell } 2^s\binom{\ell +s}{s} z^{sq} \sum_{k\geq0} N_{\mathcal L}^{\mathrm{red}}(k,\ell +s) z^{k},
\end{align*}
which completes the proof.
\end{proof}

\begin{proof}[Proof of Corollary~\ref{cor1:theta-rational}]
Write $\mathcal L=\mathcal L_\Gamma$.
Theorem~\ref{thm:theta^l-rational} implies that
\begin{align*}
\vartheta_{\mathcal L}(z)
    & = \sum_{\ell=0}^n \vartheta_{\mathcal L}^{(\ell)}(z)
      = \frac{1}{(1-z^q)^n}\sum_{\ell=0}^n (1-z^q)^\ell \sum_{s=0}^{n-\ell} 2^s\binom{\ell+s}{s} z^{sq} \,\Phi_{\mathcal L}^{(\ell+s)}(z) \\
    & = \frac{1}{(1-z^q)^n}\sum_{\ell=0}^n \left(\sum_{r=0}^\ell \binom{\ell}{r} (-1)^rz^{rq}\right) \left(\sum_{s=0}^{n-\ell} 2^s\binom{\ell+s}{s} z^{sq} \,\Phi_{\mathcal L}^{(\ell+s)}(z)\right).
\end{align*}
The two sums over the square $\{(r,s):0\leq r\leq \ell,\, 0\leq s\leq n-\ell\}$ can be replaced by the two sums over $\{(t,s): 0\leq t\leq n,\, 0\leq s\leq n\}$ by making the change of variables $t=r+s$, since $\binom{\ell}{r}=0$ for any $r>\ell$ and by letting $\Phi_{\mathcal L}^{(\ell)}(z)=0$ for any $\ell>n$.
Hence
\begin{align*}
\vartheta_{\mathcal L}(z)
    & = \frac{1}{(1-z^q)^n}\sum_{\ell=0}^n \sum_{t=0}^n z^{tq}  \sum_{s=0}^{n} \Phi_{\mathcal L}^{(\ell+s)}(z)\, 2^s\binom{\ell+s}{s}  \binom{\ell}{t-s} (-1)^{t-s} \\
    & =  \sum_{t=0}^n  \frac{z^{tq}}{(1-z^q)^n} \sum_{s=0}^{n} 2^s (-1)^{t-s} \sum_{\ell=s}^{n} \Phi_{\mathcal L}^{(\ell)}(z)\binom{\ell}{s}  \binom{\ell-s}{t-s}.
\end{align*}
We next make a new change of variables.
The sums over the quadrilateral $\{(s,\ell): 0\leq s\leq n,\, s\leq \ell\leq n\}$ is replaced by the sums over the triangle $\{(s,\ell): 0\leq \ell\leq n,\, \ell\leq s\leq n\}$ since $\binom{\ell}{s}=0$ if $s>n\leq\ell$.
Then
$
\vartheta_{\mathcal L}(z)
     =  \sum_{t=0}^n  \frac{z^{tq}}{(1-z^q)^n} \sum_{\ell=0}^{n} \binom{\ell}{t} \Phi_{\mathcal L}^{(\ell)}(z) \sum_{s=0}^{n} \binom{t}{s}2^s (-1)^{t-s} ,
$
which concludes the proof since the last sum is equal to $(-1+2)^n=1$ and $\binom{\ell}{t}=0$ for $\ell<t$.
\end{proof}

\bibliographystyle{plain}

\end{document}